\documentclass[12pt]{amsart}
\usepackage{amssymb}
\usepackage{verbatim}
\usepackage[usenames]{color}
\usepackage{hyperref}
\usepackage{url}
\usepackage[all]{xy}

\newtheorem{theorem}{Theorem}[section]
\newtheorem*{theorem*}{Theorem}
\newtheorem{proposition}[theorem]{Proposition} 
 
\newtheorem{lemma}[theorem]{Lemma}

\newtheorem{fact}[theorem]{Fact}

\newtheorem{counterexample}[theorem]{Counterexample}

\theoremstyle{definition}
\newtheorem{definition}[theorem]{Definition}

\newtheorem{question}[theorem]{Question}

\theoremstyle{remark}



\newcommand{\defemp}{\textit}

\newcommand{\forces}{\Vdash}
\newcommand{\zf}{\textrm{ZF}}
\newcommand{\zfc}{\textrm{ZFC}}

\newcommand{\dom}{\textnormal{Dom}}

\newcommand{\mf}{\mathfrak}
\newcommand{\mc}{\mathcal}
\newcommand{\mbb}{\mathbb}

\newcommand{\p}{\mathcal{P}}

\title{Weak Distributivity Implying Distributivity}
\author{Dan Hathaway}

\address{Mathematics Department \\
University of Denver\\
Denver, CO 80208, U.S.A.}

\email{Daniel.Hathaway@du.edu}

\thanks{A portion of the results of this paper
 were proven during the September 2012 Fields Institute Workshop
 on Forcing while the author was supported by the Fields Institute.
Work was also done wile under NSF grant DMS-0943832.
}


\begin{document}

\begin{abstract}
Let $\mbb{B}$ be a complete Boolean algebra.
We show
 that if $\lambda$ is an infinite cardinal
 and $\mbb{B}$ is weakly $(\lambda^\omega, \omega)$-distributive,
 then $\mbb{B}$ is $(\lambda, 2)$-distributive.
Using a similar argument, we show that if $\kappa$
 is a weakly compact cardinal such that
 $\mbb{B}$ is weakly $(2^\kappa, \kappa)$-distributive
 and $\mbb{B}$ is $(\alpha, 2)$-distributive for each $\alpha < \kappa$,
 then $\mbb{B}$ is $(\kappa, 2)$-distributive.
\end{abstract}

\maketitle

\section{Introduction}
Given sets $A$ and $B$,
 ${^A B}$ denotes the set of functions
 from $A$ to $B$.
In this article,
 $\lambda$ and $\kappa$ will denote ordinals,
 although usually they can be assumed to be
 infinite cardinals.
As defined in \cite{Jech2},
 given $\lambda$ and $\kappa$,
 we say that a complete Boolean algebra $\mbb{B}$ is
 \emph{$(\lambda, \kappa)$-distributive} iff
 $$\prod_{\alpha < \lambda}
 \sum_{\beta < \kappa} u_{\alpha, \beta} =
 \sum_{f : \lambda \to \kappa}
 \prod_{\alpha < \lambda} u_{\alpha, f(\alpha)}$$
 for any $\langle u_{\alpha, \beta} \in \mathbb{B} :
 \alpha < \lambda, \beta < \kappa \rangle$.
Given maximal antichains
 $A_1, A_2 \subseteq \mbb{B}$, we say that
 $A_2$ \textit{refines} $A_1$ iff
 $(\forall a_2 \in A_2)(\exists a_1 \in A_1)\,
 a_2 \le_{\mbb{B}} a_1$.
It is a fact that
 $\mbb{B}$ is $(\lambda, \kappa)$-distributive
 iff each size $\lambda$ collection of maximal
 antichains in $\mbb{B}$ each of size $\kappa$ has
 a common refinement.
There is also a useful characterization in terms
 of forcing (which can be found in
 \cite{Jech2} as Theorem 15.38):
\begin{fact}
\label{distfact}
A complete Boolean algebra $\mbb{B}$ is
 $(\lambda, \kappa)$-distributive iff
 $$1 \forces_{\mbb{B}}
 (\forall f : \check{\lambda} \to \check{\kappa})\,
 f \in \check{V}.$$
\end{fact}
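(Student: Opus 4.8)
This is Theorem~15.38 of \cite{Jech2}. The plan is to translate between $\mbb{B}$-names for functions $\check\lambda \to \check\kappa$ and $\lambda$-indexed families of maximal antichains of $\mbb{B}$ of size $\kappa$: I would deduce the forward implication directly from the product--sum definition of $(\lambda,\kappa)$-distributivity, and the reverse implication from the antichain--refinement criterion recalled just above the statement.

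For $(\Rightarrow)$, suppose $\mbb{B}$ is $(\lambda,\kappa)$-distributive and let $\dot f$ be a $\mbb{B}$-name with $1 \forces \dot f : \check\lambda \to \check\kappa$. For $\alpha < \lambda$ and $\beta < \kappa$ I would let $u_{\alpha,\beta} \in \mbb{B}$ be the Boolean value of the statement ``$\dot f(\check\alpha) = \check\beta$''. Since $1$ forces $\dot f(\check\alpha)$ to lie in $\check\kappa$, we have $\sum_{\beta<\kappa} u_{\alpha,\beta} = 1$ for every $\alpha < \lambda$, so distributivity gives
$$1 = \prod_{\alpha<\lambda} \sum_{\beta<\kappa} u_{\alpha,\beta} = \sum_{g : \lambda \to \kappa} \prod_{\alpha<\lambda} u_{\alpha, g(\alpha)}.$$
For $g \in {}^\lambda\kappa$ the element $\prod_{\alpha<\lambda} u_{\alpha,g(\alpha)}$ is the Boolean value of ``$\dot f(\check\alpha) = \check{g(\alpha)}$ for all $\alpha < \lambda$'', and because $1$ already forces $\dot f$ to be a total function $\check\lambda \to \check\kappa$, this equals the Boolean value of ``$\dot f = \check g$''. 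So the supremum over all $g \in {}^\lambda\kappa$ of these Boolean values is $1$, which is exactly the assertion $1 \forces \dot f \in \check V$. As $\dot f$ was an arbitrary name forced to be such a function, this direction is done. (If only a condition $b < 1$ forces $\dot f : \check\lambda \to \check\kappa$, I would run the same argument relativized below $b$, using that $(\lambda,\kappa)$-distributivity passes to the algebra below $b$.)

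For $(\Leftarrow)$, suppose $1 \forces (\forall f : \check\lambda \to \check\kappa)\, f \in \check V$, and let $\langle A_\alpha : \alpha < \lambda \rangle$ be maximal antichains of $\mbb{B}$, each of size $\kappa$; enumerate $A_\alpha$ injectively as $\langle a_{\alpha,\beta} : \beta < \kappa \rangle$. I would fix a $\mbb{B}$-name $\dot f$ with $a_{\alpha,\beta} \forces \dot f(\check\alpha) = \check\beta$ for all $\alpha, \beta$; since each $A_\alpha$ is a maximal antichain of pairwise disjoint conditions, such a name exists and $1 \forces \dot f : \check\lambda \to \check\kappa$. By hypothesis $1 \forces \dot f \in \check V$, so there is a maximal antichain $W$ such that each $w \in W$ forces $\dot f = \check{g_w}$ for some $g_w \in {}^\lambda\kappa$ in $V$. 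Fixing $w \in W$ and $\alpha < \lambda$, we have $w \forces \dot f(\check\alpha) = \check{g_w(\alpha)}$, so $w$ is incompatible with $a_{\alpha,\beta}$ for every $\beta \ne g_w(\alpha)$ (a common lower bound would force the false statement $\check\beta = \check{g_w(\alpha)}$); since $\sum_{\beta<\kappa} a_{\alpha,\beta} = 1$, it follows that $w = w \cdot a_{\alpha,g_w(\alpha)}$, i.e.\ $w \le a_{\alpha,g_w(\alpha)} \in A_\alpha$. Hence $W$ is a common refinement of $\langle A_\alpha : \alpha < \lambda\rangle$, and by the criterion recalled above $\mbb{B}$ is $(\lambda,\kappa)$-distributive.

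The content is entirely in this dictionary between names and antichains. The points needing a little care are that $\prod_{\alpha<\lambda} u_{\alpha,g(\alpha)}$ is exactly, not merely below, the Boolean value of ``$\dot f = \check g$'' once $\dot f$ is known to be total; that the name built from the $a_{\alpha,\beta}$ is genuinely forced by $1$ to be a total function; and the brief disjointness computation yielding $w \le a_{\alpha,g_w(\alpha)}$. I expect this routine bookkeeping with Boolean values and maximal antichains, rather than any conceptual difficulty, to be the main obstacle; in the paper itself it would be reasonable simply to cite \cite{Jech2}.
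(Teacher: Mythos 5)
The paper does not prove this Fact at all---it simply cites it as Theorem~15.38 of Jech---and your argument is the standard one given there: correct in both directions, including the translation $u_{\alpha,\beta}=\|\dot f(\check\alpha)=\check\beta\|$ for the forward implication and the construction of the name $\dot f$ from the antichains $A_\alpha$ together with the refinement criterion for the converse. The details you flag (exactness of $\prod_\alpha u_{\alpha,g(\alpha)}=\|\dot f=\check g\|$, totality of the constructed name, and the computation $w\le a_{\alpha,g_w(\alpha)}$) are all handled correctly, so there is nothing to add.
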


Unfortunately, the definition of
 weakly distributive varies in the
 literature
 (for example \cite{Kamburelis}).
We will use the one given by
 Jech (see \cite{Jech2}).
That is, we say that a
 complete Boolean algebra $\mbb{B}$
 is \emph{weakly $(\lambda,\kappa)$-distributive} iff
 $$\prod_{\alpha < \lambda}
 \sum_{\beta < \kappa}
 u_{\alpha, \beta} =
 \sum_{g : \lambda \to \kappa}
 \prod_{\alpha < \lambda}
 \sum_{\beta < g(\alpha)}
 u_{\alpha, \beta}.$$
This definition
 has a natural characterization
 in terms of forcing.
Given a set $X$ and
 $f, g : X \to \kappa$,
 we write $f \le g$ iff
 $g$ everywhere dominates $f$.
That is,
 $$(\forall x \in X)\,
 f(x) \le g(x).$$
\begin{fact}
\label{weakdistfact}
A complete Boolean algebra $\mbb{B}$
 is weakly $(\lambda,\kappa)$-distributive iff
 $$1 \forces_{\mbb{B}}
 (\forall f : \check{\lambda} \to \check{\kappa})
 (\exists g : \check{\lambda} \to \check{\kappa})\,
 g \in \check{V} \wedge f \le g.$$
\end{fact}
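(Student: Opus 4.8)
The plan is to prove both implications of the stated equivalence, exactly in the spirit of the proof of Fact~\ref{distfact}, by moving back and forth between elements of $\mbb{B}$ and Boolean values $\|\cdot\|$ of names for functions $\check\lambda\to\check\kappa$. Throughout I take $\kappa$ to be an infinite cardinal; what is actually needed is only that $\kappa$ is a limit ordinal, so that $\beta<\kappa$ implies $\beta+1<\kappa$. Note first that one inequality in the defining equation of weak distributivity is automatic, since $\sum_{\beta<g(\alpha)}u_{\alpha,\beta}\le\sum_{\beta<\kappa}u_{\alpha,\beta}$ for every $g$ and $\alpha$; so in each direction the real content is the inequality $\prod_{\alpha<\lambda}\sum_{\beta<\kappa}u_{\alpha,\beta}\le\sum_{g\colon\lambda\to\kappa}\prod_{\alpha<\lambda}\sum_{\beta<g(\alpha)}u_{\alpha,\beta}$.

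For the forward direction, I would suppose $\mbb{B}$ is weakly $(\lambda,\kappa)$-distributive and let $\dot f$ be a $\mbb{B}$-name with $1\forces\dot f\colon\check\lambda\to\check\kappa$; it suffices to treat names of this form, since a general $f$ as in the statement is named by something that may be modified below the complement of the relevant condition so as to be forced by $1$ to be such a function. Put $u_{\alpha,\beta}:=\|\dot f(\check\alpha)=\check\beta\|$. Then $\sum_{\beta<\kappa}u_{\alpha,\beta}=1$ for every $\alpha$, so the left-hand side of the weak-distributivity equation equals $1$, hence so does the right-hand side. For each $g\colon\lambda\to\kappa$ in $V$ we have $\prod_{\alpha<\lambda}\sum_{\beta<g(\alpha)}u_{\alpha,\beta}=\prod_{\alpha<\lambda}\|\dot f(\check\alpha)<\check{g(\alpha)}\|\le\|\dot f\le\check g\|$, because everywhere strict domination implies everywhere domination. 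Summing over $g$ then yields $1=\sum_{g}\prod_{\alpha<\lambda}\sum_{\beta<g(\alpha)}u_{\alpha,\beta}\le\sum_{g}\|\dot f\le\check g\|=\|(\exists g\in\check V)(g\colon\check\lambda\to\check\kappa\wedge\dot f\le g)\|$, which is what is wanted.

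For the reverse direction, I would assume the forcing statement and fix $\langle u_{\alpha,\beta}\colon\alpha<\lambda,\ \beta<\kappa\rangle$; put $a:=\prod_{\alpha<\lambda}\sum_{\beta<\kappa}u_{\alpha,\beta}$, and aim to show $a\le\sum_{g\colon\lambda\to\kappa}\prod_{\alpha<\lambda}\sum_{\beta<g(\alpha)}u_{\alpha,\beta}$. For each $\alpha$ fix a maximal antichain $W_\alpha\subseteq\mbb{B}$ refining the family $\{u_{\alpha,\beta}\colon\beta<\kappa\}\cup\{\neg\sum_{\beta<\kappa}u_{\alpha,\beta}\}$ (the conditions refining this family are dense, so such $W_\alpha$ exists), and for $w\in W_\alpha$ lying below some $u_{\alpha,\beta}$ let $\beta_w$ be the least such $\beta$. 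Using canonical pair names, build a name $\dot f$ such that $a\forces\dot f\colon\check\lambda\to\check\kappa$ and, for each $\alpha$, below $a$ the value $\dot f(\check\alpha)$ is $\check{\beta_w}$ for the unique $w\in W_\alpha$ in the generic that lies below some $u_{\alpha,\beta}$ — here $a\le\sum_{\beta}u_{\alpha,\beta}$ guarantees that exactly one such $w$ is generic. Let $\dot f'$ be the name equal to $\dot f$ below $a$ and equal to the constant function $\check 0$ below $\neg a$, so $1\forces\dot f'\colon\check\lambda\to\check\kappa$; from the construction one reads off
\[
a\wedge\|\dot f'(\check\alpha)=\check\beta\|\ \le\ u_{\alpha,\beta}\qquad\text{for all }\alpha<\lambda,\ \beta<\kappa .
\]
Applying the hypothesis to $\dot f'$ gives $\sum\{\|\dot f'\le\check g\|\colon g\colon\lambda\to\kappa,\ g\in V\}=1$, hence $a=\sum_{g}\bigl(a\wedge\|\dot f'\le\check g\|\bigr)$. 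For a fixed such $g$, using that $a\wedge\sum_i b_i=\sum_i(a\wedge b_i)$ and $a\wedge\prod_i b_i=\prod_i(a\wedge b_i)$ hold in any complete Boolean algebra,
\[
a\wedge\|\dot f'\le\check g\|=\prod_{\alpha<\lambda}\sum_{\beta\le g(\alpha)}\bigl(a\wedge\|\dot f'(\check\alpha)=\check\beta\|\bigr)\le\prod_{\alpha<\lambda}\sum_{\beta<g(\alpha)+1}u_{\alpha,\beta},
\]
and since $\kappa$ is a limit ordinal, $\bar g(\alpha):=g(\alpha)+1$ defines a function $\lambda\to\kappa$ with $\sum_{\beta<g(\alpha)+1}u_{\alpha,\beta}=\sum_{\beta<\bar g(\alpha)}u_{\alpha,\beta}$. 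Summing over $g$ then gives $a\le\sum_{h\colon\lambda\to\kappa}\prod_{\alpha<\lambda}\sum_{\beta<h(\alpha)}u_{\alpha,\beta}$, finishing this direction.

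The main obstacle is the name construction in the reverse direction: one must check carefully that $a$ forces $\dot f$ to be a genuine total function $\check\lambda\to\check\kappa$ and that its values are pinned down by the $u_{\alpha,\beta}$ in the sense of the displayed inequality. Both follow routinely from $W_\alpha$ being a maximal antichain refining $\{u_{\alpha,\beta}\colon\beta<\kappa\}\cup\{\neg\sum_{\beta}u_{\alpha,\beta}\}$ together with $a\le\sum_{\beta}u_{\alpha,\beta}$, but it is the place where care is required. The replacement of $g$ by $\bar g$ is exactly where the hypothesis that $\kappa$ is a limit ordinal enters: it converts the weak everywhere-domination $f\le g$ of the forcing characterization into the strict bound $\beta<g(\alpha)$ occurring in the definition of weak distributivity. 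Everything else is bookkeeping with Boolean values, and the forward direction is entirely routine.
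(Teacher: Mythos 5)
Your proof is correct. The paper states this fact without proof (it is the standard forcing characterization, in the spirit of Jech's Theorem 15.38 cited for Fact~\ref{distfact}), and your argument — reading off $u_{\alpha,\beta}:=\|\dot f(\check\alpha)=\check\beta\|$ in one direction and building a name from refining antichains in the other, with the shift from $\beta\le g(\alpha)$ to $\beta<g(\alpha)+1$ handled via $\bar g$ — is exactly the standard one and is carried out correctly. Your observation that the equivalence needs $\kappa$ to be a limit ordinal is accurate and harmless here, since in this paper $\kappa$ is always an infinite cardinal in the places where the fact is used.
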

We will show the
 following:

\begin{theorem*}[A]
Let $\lambda$ be an infinite cardinal.
If
\begin{itemize}
\item[1)] $\mbb{B}$ is weakly $(\lambda^\omega, \omega)$-distributive,
\end{itemize}
 then $\mbb{B}$ is $(\lambda,2)$-distributive.
\end{theorem*}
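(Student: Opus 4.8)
\medskip
\noindent\textbf{Proof plan.}
I would prove the contrapositive: assuming $\mbb{B}$ is not $(\lambda,2)$-distributive, I will exhibit a $\mbb{B}$-name witnessing, via Fact~\ref{weakdistfact}, that $\mbb{B}$ is not weakly $(\lambda^\omega,\omega)$-distributive. By Fact~\ref{distfact} there are a nonzero $p\in\mbb{B}$ and a name $\dot f$ with $p\forces\dot f:\check\lambda\to\check 2$ and $p\forces\dot f\notin\check V$; replacing $\dot f$ by a name agreeing with it below $p$ and constant $0$ elsewhere, we may assume $1\forces\dot f:\check\lambda\to\check 2$. For nonzero $q\le_{\mbb{B}}p$, say $q$ \emph{decides} $\dot f(\alpha)$ if $q\forces\dot f(\check\alpha)=\check 0$ or $q\forces\dot f(\check\alpha)=\check 1$, and let $D_q:=\{\alpha<\lambda : q\text{ does not decide }\dot f(\alpha)\}$. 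The first step is the lemma that $D_q$ is infinite for every nonzero $q\le p$: if $D_q$ were finite then $q$ would force $\dot f\restriction(\lambda\setminus D_q)$ to equal a fixed ground-model function, and since ${}^{D_q}2$ would be finite, some nonzero $q'\le q$ would also force $\dot f\restriction D_q$ to equal a fixed value, so $q'\forces\dot f\in\check V$, contradicting $q'\le p$.

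Next I fix the name. Put $I:=({}^{\omega}\lambda\times{}^{\omega}2)^V$; since $\lambda$ is infinite and $2\le\lambda$, we have $|I|=\lambda^\omega$. Let $\dot F$ be a name with $1\forces$ ``$\dot F:\check I\to\check\omega$, and for each $(s,\psi)\in\check I$, $\dot F(s,\psi)$ is the least $n$ with $\dot f(s(n))\ne\psi(n)$ (and $0$ if there is no such $n$)''. It then suffices to show: for each nonzero $q\le p$ there is $(s,\psi)\in I$ such that for every $m<\omega$ some nonzero $q'\le q$ forces $\dot F(\check s,\check\psi)\ge\check m$. Granting this, if $\mbb{B}$ were weakly $(\lambda^\omega,\omega)$-distributive then, identifying $I$ with $\lambda^\omega$ by a bijection in $V$, Fact~\ref{weakdistfact} would give $1\forces\exists g\in\check V(\dot F\le g)$, so some nonzero $q\le p$ would force $\dot F\le\check g$ for a fixed $g:I\to\omega$ in $V$, hence force $\dot F(\check s,\check\psi)\le\check{g(s,\psi)}$ for the $(s,\psi)$ attached to $q$ --- contradicting the displayed property. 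So $\mbb{B}$ is not weakly $(\lambda^\omega,\omega)$-distributive.

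The construction of $(s,\psi)$ from a given nonzero $q\le p$ is a recursion of length $\omega$. Set $w_0:=q$. Given a nonzero $w_n\le p$ and $s\restriction n,\psi\restriction n$: since $D_{w_n}$ is infinite, pick $s(n)\in D_{w_n}$; set $\psi(n):=0$ and let $w_{n+1}$ be the largest $r\le w_n$ with $r\forces\dot f(\check{s(n)})=\check 0$ (this exists as $\mbb{B}$ is complete). Because $s(n)\in D_{w_n}$, both $\dot f(s(n))=0$ and $\dot f(s(n))=1$ have nonzero part below $w_n$, so $w_{n+1}\ne 0$ and $w_{n+1}$ is a \emph{proper} part of $w_n$. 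Hence $w_0>w_1>\cdots$ strictly, each $w_n$ forces $\dot f(s(k))=\psi(k)$ for $k<n$, and $(s,\psi)\in I$. For the asserted property, fix $m$ and let $q'$ be the largest $r\le w_m$ with $r\forces\dot f(\check{s(m)})\ne\check{\psi(m)}$; since $s(m)\in D_{w_m}$ this $q'$ is nonzero, and $q'\le q$ forces $\dot f(s(k))=\psi(k)$ for all $k<m$ together with $\dot f(s(m))\ne\psi(m)$, so $q'\forces\dot F(\check s,\check\psi)=\check m$.

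The one genuinely delicate point --- and where I expect the conceptual work to lie --- is recognising that the above is \emph{already enough}. One is tempted to strengthen the recursion so as to also force $\prod_n w_n=0$ (i.e.\ that $q$ forces $\dot f$ eventually to differ from $\psi$ along $s$), but this can honestly fail --- for instance whenever $\mbb{B}$ is $\sigma$-closed the chain $\langle w_n\rangle$ may have nonzero infimum --- and it is not needed, since violating weak distributivity only requires $\dot F(\check s,\check\psi)$ to be forced arbitrarily large, not to be finite in all generic extensions. What makes the recursion run is exactly that $D_{w_n}$ is nonempty at every stage (supplied by the infinitude lemma; the $s(n)$ then even come out distinct automatically, though that is not used), so a coordinate undetermined by the \emph{current} condition is always available, and committing the next condition to a value there must remove a nonzero piece. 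Thus the two things to verify with care are the $D_q$-infinitude lemma and that each recursion step is a proper refinement; the rest, including the arithmetic $|{}^{\omega}\lambda\times{}^{\omega}2|=\lambda^\omega$, is bookkeeping.
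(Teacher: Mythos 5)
Your proposal is correct, but it takes a genuinely different route from the paper. You prove the contrapositive entirely on the ground-model side: from a name $\dot f$ for a new function $\lambda\to 2$ below $p$, you show the set $D_q$ of coordinates undecided by $q$ is infinite for every nonzero $q\le p$, and then a recursion through the $D_{w_n}$ produces, for each $q$, a ground-model index on which the ``first disagreement'' name $\dot F$ is forced arbitrarily large below $q$, contradicting Fact~\ref{weakdistfact}; all steps check out (the decision of the witness $g$ below $p$, the non-vanishing of $w_{n+1}$ and of the side condition $q'$, and the cardinality bookkeeping are all fine, and as you note the properness of the chain and the claim $\prod_n w_n=0$ are indeed not needed). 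The paper instead proves a ZF lemma about arbitrary transitive models $M$ (Lemma~\ref{lemma1}): for each $A\subseteq\lambda$ there is $f:{^\omega\lambda}\to\omega$ such that domination of $f$ on $({^\omega\lambda})^M$ by any $g\in M$ forces $A\in M$, proved via the tree $T$ of ``still undominated'' finite sequences and the absoluteness of well-foundedness; Theorem (A) then follows by applying this in the extension with $M$ the ground model (after transferring from ${^\omega\lambda}$ to the ordinal $\lambda^\omega$ in Lemma~\ref{straitomegalemma}). The combinatorial core is the same first-entry function --- indeed, since you always set $\psi(n)=0$, your pairs $(s,\psi)$ are redundant and the index set ${^\omega\lambda}$ of the paper would suffice --- but the mechanisms differ: your argument never needs transitive models or tree absoluteness, because the witnessing sequence $s$ is built in the ground model directly from the forcing relation, which makes it more elementary and self-contained for this specific theorem; the paper's formulation buys generality (it applies to any transitive $M\models\zf$ containing $\lambda$, with no requirement that ${^\omega\lambda}\subseteq M$, which is the point emphasized there) and is the template that gets adapted to the weakly compact and tower-number arguments in the later sections, where your purely order-theoretic shortcut would not transfer as directly.
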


\begin{theorem*}[B]
Let $\kappa$ be a weakly compact cardinal.
If
\begin{itemize}
\item[1)] $\mbb{B}$ is weakly $(2^\kappa, \kappa)$-distributive and
\item[2)] $\mbb{B}$ is $(\alpha, 2)$-distributive
 for each $\alpha < \kappa$,
\end{itemize}
 then $\mbb{B}$ is $(\kappa,2)$-distributive.
\end{theorem*}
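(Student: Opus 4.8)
The plan is to argue by contradiction. Assume $\kappa$ is weakly compact, $\mbb{B}$ is weakly $(2^\kappa,\kappa)$-distributive, $\mbb{B}$ is $(\alpha,2)$-distributive for every $\alpha<\kappa$, but $\mbb{B}$ is not $(\kappa,2)$-distributive. By Fact~\ref{distfact} there are a name $\dot X$ and a $c_0\ne 0$ with $c_0\forces\dot X\colon\check\kappa\to\check 2\wedge\dot X\notin\check V$. Passing to $\mbb{B}\restriction c_0$ (which still satisfies both distributivity hypotheses, by Facts~\ref{distfact} and~\ref{weakdistfact}, and for which $2^\kappa$ is computed in the same $V$), I may assume $1\forces\dot X\colon\check\kappa\to\check 2\wedge\dot X\notin\check V$. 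Since $\mbb{B}$ is $(\alpha,2)$-distributive, Fact~\ref{distfact} gives $1\forces\dot X\restriction\check\alpha\in\check V$ for each $\alpha<\kappa$; thus $\dot X$ names a ``fresh'' branch through ${}^{<\kappa}2$.

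Working in $V$, I form the tree $T=\{\,s\in{}^{<\kappa}2 : [\![\dot X\restriction\check{\length(s)}=\check s]\!]\ne 0\,\}$ under end-extension. Freshness makes every level $T_\alpha$ nonempty (the values $[\![\dot X\restriction\check\alpha=\check s]\!]$ for $s\in{}^\alpha 2\cap V$ partition $1$), and inaccessibility of $\kappa$ makes $|T_\alpha|\le 2^{|\alpha|}<\kappa$, so $T$ is a $\kappa$-tree. Let $[T]_V$ be its set of cofinal branches in $V$ (of size $\le 2^\kappa$), and for $b\in[T]_V$ write $Y_b=\bigcup b\in{}^\kappa 2\cap V$; note $1\forces\dot X\ne\check Y_b$. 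Let $\dot F$ name the function $[T]_V\to\kappa$ sending $b$ to the least $\alpha$ with $\dot X(\alpha)\ne Y_b(\alpha)$. Since weak $(2^\kappa,\kappa)$-distributivity entails weak $(\mu,\kappa)$-distributivity for every $\mu\le 2^\kappa$, Fact~\ref{weakdistfact} yields a condition $c\ne 0$ and a $g\in V$, $g\colon[T]_V\to\kappa$, with $c\forces\dot F\le\check g$.

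Now I build a second $\kappa$-tree. For $\delta<\kappa$ put $B_\delta=\{b\in[T]_V:g(b)<\delta\}$ and $R_\delta=\{Y_b\restriction\delta:b\in B_\delta\}\subseteq T_\delta$. A short computation shows $c\forces\dot X\restriction\check\delta\notin\check R_\delta$ for every $\delta$: otherwise some $c''\le c$ would force $\dot X\restriction\check\delta=\check{Y_b\restriction\delta}$ for some $b\in B_\delta$, hence $c''\forces\dot F(\check b)\ge\check\delta>\check{g(b)}$, contradicting $c\forces\dot F(\check b)\le\check{g(b)}$. Consequently the downward-closed tree $S=\{\,s\in{}^{<\kappa}2 : (\forall\delta\le\length(s))\ s\restriction\delta\in T_\delta\setminus R_\delta\,\}$ has all levels nonempty — $c$ forces $\dot X\restriction\check\delta$ into the $\delta$-th level of $S$, for every $\delta$ simultaneously — and levels of size $<\kappa$, so it is a $\kappa$-tree. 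By the tree property it has a cofinal branch $d\in V$; let $Z=\bigcup d$. Then $Z\restriction\delta\in S_\delta\subseteq T_\delta$ for all $\delta$, so $Z=Y_{b_0}$ for some $b_0\in[T]_V$. But for $\delta_0:=g(b_0)+1<\kappa$ we have $b_0\in B_{\delta_0}$, so $Z\restriction\delta_0=Y_{b_0}\restriction\delta_0\in R_{\delta_0}$, whereas $Z\restriction\delta_0\in S_{\delta_0}$ forces $Z\restriction\delta_0\notin R_{\delta_0}$: contradiction.

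The step I expect to be the crux is the construction and analysis of the auxiliary tree $S$: it must simultaneously be downward closed, have nonempty levels all the way up to $\kappa$, and be thin enough at each level to qualify as a $\kappa$-tree — and then its ground-model cofinal branch, supplied by the tree property, must both coincide with some $Y_{b_0}$ (forcing it into $R_{g(b_0)+1}$) and lie in $S$ (forcing it out of the same $R_\delta$'s). Pinning down these properties is exactly where the hypotheses are spent: weak $(2^\kappa,\kappa)$-distributivity supplies the everywhere-dominating $g$ and hence the sets $R_\delta$; $(\alpha,2)$-distributivity for $\alpha<\kappa$ makes $T$, and therefore $S$, an honest $\kappa$-tree; and weak compactness of $\kappa$ is what produces the branch of $S$ that collapses the whole construction. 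A routine but necessary check is that relativizing $\mbb{B}$ below $c_0$ preserves weak $(2^\kappa,\kappa)$-distributivity and $(\alpha,2)$-distributivity, which is immediate from the forcing characterizations above.
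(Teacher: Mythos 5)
Your proof is correct, and it is essentially the paper's argument recast in Boolean-valued terms: your name $\dot F(b) = {}$least point of disagreement between $\dot X$ and $Y_b$ is the first-difference function of Lemma~\ref{lemma2}, your tree $S$ (nodes of $T$ avoiding every $R_\delta$, i.e.\ nodes $s$ such that every ground-model branch through $s$ has $g$-value at least $\dom(s)$) is the paper's tree $T\subseteq B$, and in both arguments inaccessibility makes this a $\kappa$-tree while the tree property supplies the fatal ground-model branch. The only real difference is presentational: you argue directly in $V$ with names and Boolean values, restricting attention to branches of the tree of possible initial segments of $\dot X$, whereas the paper factors the same combinatorics through Lemmas~\ref{lemma2} and~\ref{weakcompstraitprop} about transitive models and then applies them in the extension with $M$ the ground model.
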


We will then discuss why
 Theorem (B) does not hold when we have
 $\kappa = \omega_1$ instead of
 $\kappa$ being weakly compact,
 and we will show one way to fix the situation
 using the tower number.
Finally, we use the same
 idea using the
 tower number
 to prove a variation of Theorem (A)
 involving weak
 $(\lambda^\kappa,\kappa)$-distributivity
 for $\kappa > \omega$.

\section{Functions from $\lambda^\omega$ to $\omega$}

The proof of the following lemma
 uses the fact that well-foundedness of
 trees is absolute.
It is crucial, for what follows,
 that this lemma does not require
 ${^\omega \lambda} \subseteq M$.
See \cite{Hathaway1} for motivation
 and discussion.

\begin{lemma}
\label{lemma1}
For each $A \subseteq \lambda$,
 there is a function $f: {^\omega \lambda} \to \omega$
 such that whenever $M$ is a transitive model of
 $\zf$ such that $\lambda \in M$ and some
 $g : {(^\omega \lambda)^M} \to \omega$ in $M$ satisfies
 $$(\forall x \in {(^\omega \lambda)^M})\, f(x) \le g(x),$$
 then $A \in M$.
\end{lemma}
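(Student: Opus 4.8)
The hint that well-foundedness (equivalently, the rank function) of a tree is absolute already dictates the shape of the argument. We want to arrange that inside any such $M$ the function $g$ lets one define, for each $\alpha<\lambda$, a set-sized tree on ${}^{<\omega}\lambda$ together with a rank computation whose outcome is the bit $\chi_A(\alpha)$; since the rank function of a well-founded tree is absolute between $M$ and the ambient universe, $M$'s computation is the correct one, so $M$ recovers $\chi_A(\alpha)$ for all $\alpha$ and hence $A\in M$. This is also exactly why the lemma is allowed not to require ${}^{\omega}\lambda\subseteq M$: when it is applied later with $M$ a ground model sitting inside a forcing extension, one cannot hope to put all of ${}^{\omega}\lambda$ into $M$, but absoluteness of well-foundedness never asked for that.

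Concretely, the plan is first to reduce to recovering one bit of $A$ at a time: fix a partition of ${}^{\omega}\lambda$ into $\lambda$ disjoint pieces $B_\alpha$ (say $B_\alpha=\{x:x(0)=\alpha\}$), re-coordinatize each $B_\alpha$ as ${}^{\omega}\lambda$, and define $f$ on $B_\alpha$ using only whether $\alpha\in A$, so that $g\restriction(B_\alpha\cap M)$ must reveal $\chi_A(\alpha)$ to $M$. On $B_\alpha$ I would plant a subtree $R_\alpha$ of ${}^{<\omega}\lambda$, drawn from one fixed ``master'' scheme of trees and chosen so that a single absolute feature of $R_\alpha$ (for instance the rank of its root) encodes $\chi_A(\alpha)$, and then let $f$ on $B_\alpha$ record the exit-time function $x\mapsto\min\{n:x\restriction n\notin R_\alpha\}$ of $R_\alpha$ (with a fixed conventional value on any branches of $R_\alpha$), augmented by enough ordinal bookkeeping attached to the nodes of ${}^{<\omega}\lambda$ that any $g$ dominating $f$ on $B_\alpha$ lets $M$ reconstruct, faithfully up to the relevant absolute feature, the tree $R_\alpha$. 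One natural reconstruction to analyze is the union of all well-founded subtrees of ${}^{<\omega}\lambda$ lying in $M$ whose exit-time function is pointwise $\le g$ on $B_\alpha\cap M$: this is automatically well-founded and contains $R_\alpha$. Then $M$ reads $\chi_A(\alpha)$ off this reconstruction and assembles $A$.

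The delicate point — and the one I expect to be the real obstacle — is the faithfulness of the reconstruction for every dominating $g$, not merely for $g=f$. A wasteful $g$, lying far above $f$, makes the reconstructed tree strictly larger than $R_\alpha$ and can inflate the feature one is trying to read off, so the encoding must not rely on any fixed numerical threshold; the information ``$\alpha\in A$'' has to be carried by the ordinal/relational structure of the planted trees, and the role of the extra bookkeeping in $f$ is precisely to force the reconstruction to be rank-faithful rather than merely a superset (e.g.\ by planting a canonical rank-maximal well-founded tree of prescribed rank and recording exit times together with the ranks of the exited nodes). Getting that to work uniformly in $g$ — so that the tree $M$ computes genuinely carries the feature that encodes $\chi_A(\alpha)$, with absoluteness of well-foundedness then transferring the verdict correctly — is the technical heart of the lemma; the block decomposition, the handling of branches of $R_\alpha$, and the appeals to absoluteness are routine.
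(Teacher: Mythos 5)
There is a genuine gap, and you have located it yourself: the ``faithfulness of the reconstruction for every dominating $g$'' is not a routine detail to be deferred, it is the entire content of the lemma, and your sketch does not supply the mechanism. Domination gives only a one-sided bound $f\le g$, so any object $M$ builds from $g$ (your union of well-founded subtrees with exit times $\le g$) only grows as $g$ grows, and any feature that is monotone under inclusion --- in particular the rank of the root, which is what you propose to read the bit $\chi_A(\alpha)$ off of --- can be inflated by a wasteful $g$. The unspecified ``ordinal bookkeeping'' that is supposed to force rank-faithfulness is exactly the missing idea, and as stated the positive plan (have $M$ reconstruct a planted tree $R_\alpha$ and read an absolute invariant from it) does not go through.

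The paper's proof is different in kind and avoids reconstruction entirely. It takes the single function $f(x)=n+1$ where $n$ is least with $x(n)\in A$ (and $f(x)=0$ if $x$ never meets $A$), and argues by contradiction from $A\notin M$. From $g\in M$ one defines in $M$ the set $B=\{t\in{}^{<\omega}\lambda: g(x)\ge |t| \text{ for all } x\in M \text{ extending } t\}$ and the tree $T$ of $t$ all of whose initial segments lie in $B$. The key observation is a definability dichotomy rather than a decoding: for any $t$ whose entries avoid $A$, every $a\in A$ gives $t^\frown\langle a\rangle\in B$, so if \emph{only} elements of $A$ extended $t$ into $B$ then $A$ would be definable in $M$ from $B\in M$; since $A\notin M$, one can instead recursively choose $a_0,a_1,\dots\notin A$ with every finite initial segment in $B$. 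Thus $T$ is ill-founded in $V$, absoluteness of well-foundedness yields a branch $x'\in M$ through $T$, and then $g(x')\ge n$ for all $n$, a contradiction. So the information in $A$ is never ``read off'' inside $M$ at all; the hypothesis $A\notin M$ is what manufactures the infinite branch, and domination plus absoluteness kills it. Your block decomposition into $B_\alpha$ and the planted trees $R_\alpha$ are not needed, and without an actual solution to the uniformity-in-$g$ problem your proposal does not yet constitute a proof.
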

\begin{proof}
Fix $A \subseteq \lambda$.
Define $f: {^\omega \lambda} \to \omega$ by
 $$f(x) := \begin{cases}
 0 & \mbox{if } (\forall n < \omega)\, x(n) \not\in A,\\
 n+1 & \mbox{if } x(n) \in A \mbox{ but }
  (\forall m < n)\, x(m) \not\in A.
 \end{cases}$$
Let $M$ be a transitive model of $\zf$
 such that $\lambda \in M$
 but $A \not\in M$.
Suppose, towards a contradiction,
 that there is some $g \in M$
 satisfying $(\forall x \in (^\omega \lambda)^M)\,
 f(x) \le g(x)$.
Let $B$ be the set
 $$B := \{ t \in {^{<\omega} \lambda} :
 g(x) \ge |t| \mbox{ for all } x \mbox{ in $M$ extending } t \}.$$
Notice that $B \in M$.
Let $T \subseteq {^{<\omega} \lambda}$
 be the set of elements of $B$ all of whose
 initial segments are also in $B$.
Note that $T$ is a tree and $T \in M$.

For all $a \in \lambda$,
 $a \in A$ implies $\langle a \rangle \in B$.
Thus, there must be some $a_0 \in \lambda$
 such that $a_0 \not\in A$ but
 $\langle a_0 \rangle \in B$.
If there was not, then $A$ could be defined in $M$
 by $A = \{ a \in \lambda : \langle a \rangle \in B \}$.

Next, for all $a \in \lambda$,
 $a \in A$ implies $\langle a_0, a \rangle \in B$.
Thus, by similar reasoning as before,
 there must be some $a_1 \in \lambda$ such that
 $a_1 \not\in A$ but $\langle a_0, a_1 \rangle \in B$.
Continuing like this, we can construct a sequence
 $x \in {^\omega \lambda}$ satisfying
 $(\forall n < \omega)\, x \restriction n \in B$.
Thus,
 $(\forall n < \omega)\, x \restriction n \in T$,
 so $T$ is not well-founded.

Since well-foundedness is absolute,
 there is some path $x'$ through $T$ in $M$.
Since $(\forall n < \omega)\, x' \restriction n \in B$,
 we have $(\forall n < \omega)\, g(x') \ge n$,
  which is impossible.
\end{proof}

This implies the following lemma,
 whose order of quantifiers is not as powerful,
 but the functions have the ordinal $(\lambda^\omega)^M$
 instead of the set of sequences
 $(^{\omega} \lambda)^M$
 as their domains:

\begin{lemma}
\label{straitomegalemma}
Let $M$ be a transitive model of $\zf$
 such that the ordinal
 $\lambda$ is in $M$
 and $({^\omega \lambda})^M$ can be well-ordered in $M$.
Assume that for each $f : (\lambda^\omega)^M \to \omega$
 there is some
 $g : (\lambda^\omega)^M \to \omega$ in $M$
 such that $f \le g$.
Then $\p(\lambda) \subseteq M$.
\end{lemma}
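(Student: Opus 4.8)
The plan is to deduce Lemma \ref{straitomegalemma} from Lemma \ref{lemma1} by transporting the domination hypothesis from the ordinal $(\lambda^\omega)^M$ to the set of sequences $({^\omega \lambda})^M$ along a bijection that belongs to $M$.

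First I would work inside $M$ and fix a bijection $e : (\lambda^\omega)^M \to ({^\omega \lambda})^M$; this exists because $({^\omega \lambda})^M$ is well-orderable in $M$, and we may take $(\lambda^\omega)^M$ to be its cardinality as computed in $M$. Note that $e \in M$, and that since $M$ is transitive we have $({^\omega \lambda})^M \subseteq {^\omega \lambda}$, so any function $f : ({^\omega \lambda})^M \to \omega$ from the ambient universe can legitimately be composed with $e$.

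Next I would check that the hypothesis of the lemma yields the following: for every $f : ({^\omega \lambda})^M \to \omega$ there is $g : ({^\omega \lambda})^M \to \omega$ in $M$ with $f \le g$. Given such an $f$, apply the hypothesis to $f \circ e : (\lambda^\omega)^M \to \omega$ to obtain $g' : (\lambda^\omega)^M \to \omega$ in $M$ with $f \circ e \le g'$. Then $g := g' \circ e^{-1}$ lies in $M$, because $e \in M$, and for each $x \in ({^\omega \lambda})^M$ we have $f(x) = (f \circ e)(e^{-1}(x)) \le g'(e^{-1}(x)) = g(x)$, so $f \le g$.

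Finally, to conclude $\p(\lambda) \subseteq M$, I would fix an arbitrary $A \subseteq \lambda$ and apply Lemma \ref{lemma1} to it, obtaining $f : {^\omega \lambda} \to \omega$ with the stated absoluteness property. Restricting $f$ to $({^\omega \lambda})^M$ and invoking the previous step produces $g : ({^\omega \lambda})^M \to \omega$ in $M$ with $f(x) \le g(x)$ for all $x \in ({^\omega \lambda})^M$; since $\lambda \in M$ and $M$ is a model of $\zf$, Lemma \ref{lemma1} gives $A \in M$. As $A$ was an arbitrary subset of $\lambda$, this shows $\p(\lambda) \subseteq M$. The argument is routine once this translation is set up; the only point requiring care is that the dominating function must be pulled back into $M$ along a bijection that $M$ actually possesses, which is exactly where, and the only place where, the well-orderability assumption is used.
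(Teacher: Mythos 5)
Your proposal is correct and matches the paper's own argument: the paper likewise fixes a bijection $\eta : (\lambda^\omega)^M \to ({^\omega\lambda})^M$ in $M$ (using the well-orderability hypothesis), pulls the function from Lemma~\ref{lemma1} back along $\eta$, applies the domination hypothesis, and pushes the dominating function forward to conclude $A \in M$. The only difference is cosmetic: you isolate the transfer of the domination property as a separate step before quantifying over $A$, while the paper interleaves the two.
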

\begin{proof}
Consider any $A \in \p(\lambda)$.
Use the lemma above with $A$ to get
 $\tilde{f} : {^\omega \lambda} \to \omega$
 such that if
 $\tilde{g} : ({^\omega \lambda})^M \to \omega$
 is any function in $M$ which satisfies
 \begin{equation}
 \label{eq1}
 (\forall x \in ({^\omega \lambda})^M)\,
 \tilde{f}(x) \le \tilde{g}(x),
 \end{equation}
 then $A \in M$.
Since $({^\omega \lambda})^M$ can be well-ordered in $M$,
 fix a bijection $$\eta :
  (\lambda^\omega)^M \to ({^\omega \lambda})^M$$ in $M$.
Define $f : (\lambda^\omega)^M \to \omega$ by
 $$f(\alpha) := \tilde{f}(\eta(\alpha)).$$
That is, the following diagram commutes:
 $$\xymatrix{
 ({^\omega \lambda})^M \ar[r]^{\tilde{f}} & \omega \\
 (\lambda^\omega)^M \ar[u]^{\eta} \ar[ur]_{f}.
 }$$

By hypothesis,
 let $g : (\lambda^\omega)^M \to \omega$
 be a function in $M$ which everywhere dominates $f$.
Define
 $\tilde{g} : ({^\omega \lambda})^M \to \omega$ by
 $$\tilde{g}(x) := g(\eta^{-1}(x)).$$
We have that $\tilde{g} \in M$ and
 $\tilde{g}$ satisfies \ref{eq1},
 so by the hypothesis on $\tilde{f}$,
 $A \in M$.
\end{proof}

We now have the main result of this section:
\begin{theorem*}[A]
Let $\mbb{B}$ be a complete Boolean algebra
 and $\lambda$ be an infinite cardinal.
If $\mbb{B}$ is weakly
 $(\lambda^\omega, \omega)$-distributive,
 then $\mbb{B}$ is $(\lambda,2)$-distributive.
\end{theorem*}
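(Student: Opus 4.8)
The plan is to feed the two forcing characterizations (Facts~\ref{distfact} and~\ref{weakdistfact}) into Lemma~\ref{straitomegalemma}, after which the theorem is a short unwinding of definitions. By Fact~\ref{distfact}, $\mbb{B}$ is $(\lambda, 2)$-distributive if and only if $1 \forces_{\mbb{B}} (\forall f : \check{\lambda} \to \check{2})\, f \in \check{V}$; and since within $\zf$ a function $\lambda \to 2$ is interchangeable with its preimage of $1$, this amounts to saying that forcing with $\mbb{B}$ adds no new subset of $\lambda$. So it suffices to show that $\p(\lambda)^{V[G]} \subseteq V$ for every generic $G \subseteq \mbb{B}$.

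Fix such a $G$, work in $V[G]$, and set $M := V$. The goal is to apply Lemma~\ref{straitomegalemma} to this $M$. Throughout, read $\lambda^\omega$ as the ground-model cardinal $(\lambda^{\aleph_0})^V$. Then $M$ is a transitive model of $\zf$ containing the ordinal $\lambda$, and $({^\omega \lambda})^M$ can be well-ordered in $M$ since $M \models \zfc$; also $(\lambda^\omega)^M$ is an ordinal that $M$ sees as equinumerous with $({^\omega \lambda})^M$, which is the shape the proof of Lemma~\ref{straitomegalemma} requires.

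The remaining (and only substantive) hypothesis to check is: every $f : (\lambda^\omega)^M \to \omega$ in $V[G]$ is everywhere dominated by some $g : (\lambda^\omega)^M \to \omega$ in $M$. This is exactly weak $(\lambda^\omega,\omega)$-distributivity, via Fact~\ref{weakdistfact} with $\lambda^\omega$ in the role of $\lambda$ and $\omega$ in the role of $\kappa$ (the domain $(\lambda^\omega)^M = (\lambda^\omega)^V$ being precisely the ordinal denoted by the relevant check in that statement). Lemma~\ref{straitomegalemma} then yields $\p(\lambda) \subseteq M$, i.e.\ $\p(\lambda)^{V[G]} \subseteq V$. Since $G$ was arbitrary and every $f : \lambda \to 2$ in $V[G]$ is recoverable in $V$ from $f^{-1}(\{1\}) \in \p(\lambda)^{V[G]} \subseteq V$, we get $1 \forces_{\mbb{B}} (\forall f : \check{\lambda} \to \check{2})\, f \in \check{V}$, so $\mbb{B}$ is $(\lambda, 2)$-distributive by Fact~\ref{distfact}.

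As for difficulty: there is essentially no obstacle left at this stage, since the genuine content has already been isolated in Lemma~\ref{lemma1} --- the device of coding an arbitrary $A \subseteq \lambda$ into a single $f : {^\omega \lambda} \to \omega$ whose everywhere-domination inside $M$ forces $A \in M$, proved via absoluteness of well-foundedness of trees --- and then transported in Lemma~\ref{straitomegalemma} from the domain $({^\omega \lambda})^M$ to the ordinal $(\lambda^\omega)^M$. The one point that needs a little care is keeping $\lambda^\omega$ fixed as a ground-model cardinal, so that the domain of the functions produced by weak $(\lambda^\omega,\omega)$-distributivity literally matches the set $(\lambda^\omega)^M$ appearing in Lemma~\ref{straitomegalemma}.
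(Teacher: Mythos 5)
Your proof is correct and follows exactly the paper's route: force with $\mbb{B}$, take $M$ to be the ground model, use weak $(\lambda^\omega,\omega)$-distributivity (via Fact~\ref{weakdistfact}) to verify the domination hypothesis of Lemma~\ref{straitomegalemma}, and conclude $\p(\lambda)^{V[G]} \subseteq V$, i.e.\ $(\lambda,2)$-distributivity by Fact~\ref{distfact}. Your explicit attention to $\lambda^\omega$ being the ground-model cardinal is a useful clarification of a point the paper leaves implicit.
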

\begin{proof}
Let $\mu := \lambda^\omega$.
Assume $\mbb{B}$ is weakly $(\mu,\omega)$-distributive.
Force with $\mbb{B}$.
Every $f : \mu \to \omega$ in the extension
 can be everywhere dominated by some
 $g : \mu \to \omega$ in the ground model,
 so applying the lemma above
 in the extension
 (setting $M$ to be the ground model)
 tells us that
 the $\p(\lambda)$ of the extension is included
 in the ground model.
Hence,
 $\mbb{B}$ is $(\lambda,2)$-distributive.
\end{proof}

\section{Functions from $2^\kappa$ to $\kappa$ with $\kappa$ Weakly Compact}

The first lemma in the previous section was the key
 to the theorem there.
We have a parallel lemma here which,
 instead of using the absoluteness of trees being well-founded,
 uses the tree property to get similar absoluteness.
It is important that
 this lemma does not require
 ${^\kappa 2} \subseteq M$.
By weakly compact,
 we mean strongly inaccessible and
 having the tree property.

\begin{lemma}
\label{lemma2}
For each $a \in {^\kappa 2}$,
 there is a function
 $f: {^\kappa 2} \to \kappa$ such that
 whenever $M$ is a transitive model of $\zf$
 such that $\kappa \in M$,
 ${^{<\kappa}2} \subseteq M$,
 $({{^\kappa} 2})^M$ can be well-ordered in $M$,
 $(\kappa$ is weakly compact$)^M$,
 and some $g : ({^\kappa 2})^M \to \kappa$ in $M$
 satisfies
 $$(\forall x \in ({^\kappa 2})^M)\, f(x) \le g(x),$$
 then $a \in M$.
\end{lemma}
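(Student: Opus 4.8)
The plan is to mimic Lemma \ref{lemma1}, replacing the absoluteness of well-foundedness of $\omega$-branching trees with the absoluteness of the tree property for $\kappa$-trees that comes from $\kappa$ being weakly compact. So, given $a \in {^\kappa 2}$, I would define $f : {^\kappa 2} \to \kappa$ to record, for each $x \in {^\kappa 2}$, the least place where $x$ ``detects'' membership information about $a$: set $f(x) := 0$ if $(\forall \xi < \kappa)\, x(\xi) \ne a(\xi)$, and otherwise let $f(x) := \xi + 1$ where $\xi$ is least such that $x(\xi) = a(\xi)$. (One must be a little careful here: unlike the $\omega$ case, comparing ``$x$ agrees with $a$ at $\xi$'' only uses the single value $a(\xi) \in 2$, so this is a legitimate definition, and since $x$ itself is a full $\kappa$-sequence the least such $\xi$ exists when it exists at all. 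The point is that knowing an upper bound $g(x)$ for $f(x)$ for enough $x$ in $M$ will let $M$ reconstruct $a$.)

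Next, assume $M$ is as in the hypothesis and, toward a contradiction, that $a \notin M$ but some $g : ({^\kappa 2})^M \to \kappa$ in $M$ everywhere dominates $f$ on $({^\kappa 2})^M$. Following Lemma \ref{lemma1}, I would let
$$B := \{ t \in {^{<\kappa} 2} : g(x) \ge |t| \text{ for all } x \in ({^\kappa 2})^M \text{ extending } t \},$$
and let $T \subseteq {^{<\kappa}2}$ be the set of $t \in B$ all of whose initial segments lie in $B$. Since ${^{<\kappa}2} \subseteq M$ and $g \in M$, we get $B, T \in M$. As before, for each $\xi < \kappa$ and each $t \in {^\xi 2}$, if $t \subseteq a$ (i.e. $t = a \restriction \xi$) then $t \in B$: any $x \in ({^\kappa 2})^M$ extending $a \restriction \xi$ has $f(x) \ge \xi = |t|$ because $x$ agrees with $a$ nowhere below $\xi$... — here I need to recheck the exact bookkeeping on $f$ so that ``$t \subseteq a \Rightarrow t \in B$'' comes out, possibly flipping the roles of agreement and disagreement in the definition of $f$; the arithmetic is routine but the direction matters. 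Then I argue that $a \restriction \xi \in T$ for all $\xi$, so $T$ is a tree of height $\kappa$ with levels of size $< \kappa$ (since ${^{<\kappa}2}$ has levels of size $< \kappa$, using inaccessibility of $\kappa$, which holds in $M$), i.e.\ $T$ is a $\kappa$-tree, and it has a branch, namely $a$.

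Now comes the one genuinely new ingredient. In $M$, $\kappa$ is weakly compact, hence has the tree property, so $T$ has a branch $x' \in M$ — a cofinal branch of $T$, so $x' \in ({^\kappa 2})^M$ with $x' \restriction \xi \in B$ for all $\xi < \kappa$. By definition of $B$ this forces $g(x') \ge \xi$ for every $\xi < \kappa$, contradicting $g(x') \in \kappa$. Hence $a \in M$. The main obstacle I anticipate is not the tree-property step itself but verifying that $T$, as defined, is genuinely a $\kappa$-tree inside $M$ with cofinal branch exactly capturing $a$: I need $T$'s levels to be set-sized below $\kappa$ (fine, by inaccessibility in $M$, and ${^{<\kappa}2} \subseteq M$ so the levels are computed correctly), I need $T$ to have height $\kappa$ (this is the ``$a \restriction \xi \in T$ for all $\xi$'' claim, which is where the definition of $f$ must be tuned so that initial segments of $a$ never leave $B$), and I need the absoluteness point that a cofinal branch of $T$ lying in $M$ really is an element of $({^\kappa 2})^M$ on which $g$ is defined — this is why the hypothesis ``$({^\kappa 2})^M$ can be well-ordered in $M$'' and ``${^{<\kappa}2} \subseteq M$'' are both needed. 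Once those are in place, the contradiction is immediate and the proof closes exactly as in Lemma \ref{lemma1}.
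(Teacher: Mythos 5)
Your strategy is exactly the paper's: encode $a$ into $f$, form the set $B$ of nodes $t$ such that $g(x)\ge\dom(t)$ for every $x$ in $M$ extending $t$, pass to the tree $T$ of nodes all of whose initial segments lie in $B$, show $T$ is a $\kappa$-tree in $M$ containing every $a\restriction\xi$, and use the tree property in $M$ to produce a branch $x'\in M$ with $g(x')$ unbounded in $\kappa$. The one point you flagged is indeed where your writeup, as it stands, breaks: $f$ must record the first \emph{disagreement} with $a$, not the first agreement. With your definition, any $x$ extending $a\restriction\xi$ (for $\xi\ge 1$) agrees with $a$ at $0$, so $f(x)=1$, and nothing forces $a\restriction\xi$ into $B$. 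The paper sets $f(x)=\gamma+1$ where $\gamma$ is least with $x(\gamma)\ne a(\gamma)$, and $f(x)=0$ if no such $\gamma$ exists (i.e.\ $x=a$). Then for $x\in({^\kappa 2})^M$ extending $a\restriction\xi$: since $a\notin M$ (the contradiction hypothesis), $x\ne a$, so the least disagreement $\gamma$ exists and satisfies $\gamma\ge\xi$, whence $g(x)\ge f(x)=\gamma+1>\xi=\dom(a\restriction\xi)$. Note that this is where $a\notin M$ does real work, beyond merely being the statement you are contradicting: it guarantees every $x\in M$ through $a\restriction\xi$ eventually leaves $a$, which is what keeps the initial segments of $a$ in $B$. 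With that correction, the rest of your outline ($B,T\in M$ because ${^{<\kappa}2}\subseteq M$ and $g\in M$; $T$ a $\kappa$-tree by inaccessibility of $\kappa$ in $M$; the tree property yielding a cofinal branch in $M$ on which $g$ cannot be defined consistently) matches the paper's proof.
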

\begin{proof}
Fix $a \in {^\kappa 2}$.
Let $f : {^\kappa 2} \to \kappa$ be the function
 $$f(x) := \begin{cases}
 0 & \mbox{if } (\forall \alpha < \kappa)\, x(\alpha) = a(\alpha), \\
 \alpha+1 & \mbox{if } x(\alpha) \not= a(\alpha) \mbox{ but }
 (\forall \beta < \alpha)\, x(\beta) = a(\beta).
 \end{cases}$$
Let $M$ be an appropriate transitive model of $\zf$.
Suppose $g : ({^\kappa 2})^M \to \kappa$ in $M$ satisfies
 $(\forall x \in ({^\kappa 2})^M)\, f(x) \le g(x)$.
We will show that $a \in M$.

Suppose, towards a contradiction,
 that $a \not\in M$.
Let $$B := \{ t \in {^{<\kappa} 2} :
 g(x) \ge \dom(t) \mbox{ for all } x
 \mbox{ in $M$ extending t} \}.$$
Note that by definition,
 there cannot be any $x \in {^{\kappa}2}$ in $M$ satisfying
 $(\forall \alpha < \kappa)\, x \restriction \alpha \in B$
 because if there was such an $x$, we would have
 $(\forall \alpha < \kappa)\, g(x) \ge \alpha$,
 which is impossible.
Since $B$ need not be a tree,
 let $T \subseteq {^{< \kappa} 2}$ be the tree of
 those elements of $B$ all of whose initial segments
 are also in $B$.
Again, $T$ cannot have a length $\kappa$ path in $M$.
Note that for each $\alpha < \kappa$,
 $a \restriction \alpha \in B$.
This is because any
 $x \in {^{\kappa} 2}$ in $M$ which extends
 $a \restriction \alpha$ differs from $a$
 (since $a \not\in M$),
 and the smallest $\gamma$ such that
 $x(\gamma) \not= a(\gamma)$ must be $\ge \alpha$, so
 $$g(x) \ge f(x) = \gamma+1 > \gamma \ge \alpha = \dom(a \restriction \alpha).$$
Since
 $(\forall \alpha < \kappa)\, a \restriction \alpha \in B$,
 also
 $(\forall \alpha < \kappa)\, a \restriction \alpha \in T$.

Now, $B \in M$ (since ${^{<\kappa}2} \subseteq M$ and $g \in M$)
 and so $T \in M$.
Since $(\forall \alpha < \kappa)\, a \restriction \alpha \in T$,
 $(T$ has height $\kappa)^M$.
Since $(\kappa$ is strongly inaccessible$)^M$,
 we have $(T$ is a $\kappa$-tree$)^M$.
Since $(\kappa$ has the tree property$)^M$,
 there is a length $\kappa$ path through $T$ in $M$,
 which we said earlier was impossible.
\end{proof}

As before,
 this implies the following lemma,
 whose order of quantifiers is not as powerful,
 but the functions have the ordinal $({2^\kappa})^M$
 instead of the set of
 sequences $({{^\kappa}2})^M$
 as their domains:

\begin{lemma}
\label{weakcompstraitprop}
Let $M$ be a transitive model of $\zf$ such that
 the ordinal $\kappa$ is in $M$,
 ${^{<\kappa}2} \subseteq M$,
 $({^\kappa 2})^M$ can be well-ordered in $M$, and
 $(\kappa$ is weakly compact$)^M$.
Assume that for each
 $f : (2^\kappa)^M \to \kappa$
 there is some
 $g : (2^\kappa)^M \to \kappa$ in $M$
 such that $f \le g$.
Then $\p(\kappa) \subseteq M$.
\end{lemma}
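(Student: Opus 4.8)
The plan is to deduce Lemma~\ref{weakcompstraitprop} from Lemma~\ref{lemma2} in exactly the same way that Lemma~\ref{straitomegalemma} was deduced from Lemma~\ref{lemma1}: transport everything across a bijection between the ordinal $(2^\kappa)^M$ and the set of sequences $({^\kappa 2})^M$ that lives in $M$. First I would fix an arbitrary $a \in \p(\kappa)$; identifying $a$ with its characteristic function, we may regard $a \in {^\kappa 2}$. Apply Lemma~\ref{lemma2} to $a$ to obtain a function $\tilde f : {^\kappa 2} \to \kappa$ with the stated absoluteness property: any $\tilde g \in M$ with $\tilde g : ({^\kappa 2})^M \to \kappa$ everywhere dominating $\tilde f$ on $({^\kappa 2})^M$ forces $a \in M$. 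Note that the four standing hypotheses on $M$ in Lemma~\ref{weakcompstraitprop} are precisely the hypotheses Lemma~\ref{lemma2} requires of $M$, so this application is legitimate.

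Next, since $({^\kappa 2})^M$ can be well-ordered in $M$ and has size $(2^\kappa)^M$ there, fix in $M$ a bijection $\eta : (2^\kappa)^M \to ({^\kappa 2})^M$. Define $f : (2^\kappa)^M \to \kappa$ by $f(\alpha) := \tilde f(\eta(\alpha))$, so that $\tilde f \circ \eta = f$. By the hypothesis of the lemma applied to this $f$, there is some $g : (2^\kappa)^M \to \kappa$ in $M$ with $f \le g$, i.e. $g(\alpha) \ge f(\alpha)$ for every $\alpha < (2^\kappa)^M$. Now set $\tilde g := g \circ \eta^{-1} : ({^\kappa 2})^M \to \kappa$. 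Since $\eta, \eta^{-1}, g \in M$, we get $\tilde g \in M$, and for every $x \in ({^\kappa 2})^M$ we have $\tilde g(x) = g(\eta^{-1}(x)) \ge f(\eta^{-1}(x)) = \tilde f(\eta(\eta^{-1}(x))) = \tilde f(x)$, so $\tilde g$ everywhere dominates $\tilde f$ on $({^\kappa 2})^M$. By the defining property of $\tilde f$ from Lemma~\ref{lemma2}, it follows that $a \in M$. Since $a \in \p(\kappa)$ was arbitrary, $\p(\kappa) \subseteq M$.

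There is essentially no obstacle here: the argument is a verbatim reprise of the proof of Lemma~\ref{straitomegalemma}, with $\lambda$ replaced by $2$ (so ${^\kappa \lambda}$ becomes ${^\kappa 2}$ and $\lambda^\omega$ becomes $2^\kappa$) and with the ambient absoluteness lemma upgraded from Lemma~\ref{lemma1} to Lemma~\ref{lemma2}. The only point worth a second glance is bookkeeping: one must check that the hypotheses imposed on $M$ in the statement of Lemma~\ref{weakcompstraitprop} --- namely $\kappa \in M$, ${^{<\kappa}2} \subseteq M$, $({^\kappa 2})^M$ well-orderable in $M$, and $(\kappa$ weakly compact$)^M$ --- match exactly those demanded by Lemma~\ref{lemma2}, which they do, and that the well-orderability hypothesis is exactly what is needed to produce the bijection $\eta$ inside $M$. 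Having verified this, the commuting-diagram transport goes through unchanged, and the proof is complete.
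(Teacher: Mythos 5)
Your proposal is correct and is exactly the argument the paper intends: the paper's proof is just the remark that it is ``similar to that of Lemma~\ref{straitomegalemma},'' and you have carried out precisely that transport of Lemma~\ref{lemma2} across a bijection $\eta : (2^\kappa)^M \to ({^\kappa 2})^M$ chosen in $M$. The bookkeeping check that the hypotheses on $M$ match those of Lemma~\ref{lemma2} is the only substantive point, and you have verified it.
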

\begin{proof}
The proof is similar to that
 of Lemma~\ref{straitomegalemma}.
\end{proof}

As before, the main result of this section follows:
\begin{theorem*}[B]
\label{weakcompact}
Let $\mbb{B}$ be a complete Boolean algebra and
 $\kappa$ be a weakly compact cardinal.
If $\mbb{B}$ is weakly $(2^\kappa, \kappa)$-distributive
 and $\mbb{B}$ is $(\alpha, 2)$-distributive for each
 $\alpha < \kappa$, then $\mbb{B}$ is $(\kappa,2)$-distributive.
\end{theorem*}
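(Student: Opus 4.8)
The plan is to repeat the proof of Theorem~(A) almost verbatim, using Lemma~\ref{weakcompstraitprop} in place of Lemma~\ref{straitomegalemma}; hypothesis~(2) is exactly what is needed to supply the additional side condition ${^{<\kappa}2}\subseteq M$ demanded by that lemma. Set $\mu := 2^\kappa$, the ordinal computed in $V$, and force with $\mbb{B}$ to obtain $V[G]$. Since $\mbb{B}$ is weakly $(\mu,\kappa)$-distributive, Fact~\ref{weakdistfact} gives that in $V[G]$ every $f:\mu\to\kappa$ is everywhere dominated by some $g:\mu\to\kappa$ lying in $V$. Since $\mbb{B}$ is $(\alpha,2)$-distributive for each $\alpha<\kappa$, Fact~\ref{distfact} gives that for each such $\alpha$ no new function $\alpha\to 2$ is added, whence $({^{<\kappa}2})^{V[G]}\subseteq V$.

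Next I would work inside $V[G]$ and apply Lemma~\ref{weakcompstraitprop} with $M:=V$. Its standing hypotheses are met: $\kappa\in V$; ${^{<\kappa}2}\subseteq V$ by the previous paragraph; $({^\kappa 2})^{V}$ can be well-ordered in $V$ because $V\models\zfc$; and $(\kappa$ is weakly compact$)^{V}$ by assumption. Since $(2^\kappa)^{M}=(2^\kappa)^{V}=\mu$, the domination hypothesis of the lemma is precisely what the first paragraph provides. Therefore $\p(\kappa)\subseteq V$; that is, in $V[G]$ every subset of $\kappa$ already belongs to $V$. As $G$ was an arbitrary generic, $1\forces_{\mbb{B}}(\forall f:\check\kappa\to\check 2)\,f\in\check V$, and so $\mbb{B}$ is $(\kappa,2)$-distributive by Fact~\ref{distfact}.

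Given Lemmas~\ref{lemma2} and~\ref{weakcompstraitprop}, the only point needing care — and the analogue of the main obstacle here — is verifying that the side conditions of Lemma~\ref{weakcompstraitprop} transfer to the pair $(V[G],V)$. The weak-compactness and well-orderability clauses are demanded only of $M=V$, not of $V[G]$, so they are immediate from the ground-model hypotheses; the clause ${^{<\kappa}2}\subseteq M$ is the one place where hypothesis~(2) is used, and it must be invoked for all $\alpha<\kappa$ simultaneously. In particular, nothing about the status of $\kappa$ in $V[G]$ needs to be known in advance — it all follows afterward from $\p(\kappa)\subseteq V$.
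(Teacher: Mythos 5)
Your proof is correct and takes essentially the same approach as the paper, which simply remarks that Theorem (B) follows from Lemma~\ref{weakcompstraitprop} just as Theorem (A) followed from Lemma~\ref{straitomegalemma}. You have filled in the details the paper omits, and in particular you correctly identify hypothesis (2) as exactly what supplies the side condition ${^{<\kappa}2}\subseteq M$ when the lemma is applied in $V[G]$ with $M=V$.
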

\begin{proof}
This follows from the lemma above
 just as Theorem (A) followed from Lemma~\ref{straitomegalemma}.
\end{proof}

\section{The Tower Number}

One might hope that Theorem (B)
 holds when $\kappa = \omega_1$ instead of
 $\kappa$ being weakly compact.
That is, one might hope that
 if a complete Boolean algebra $\mbb{B}$
 is weakly $(2^{\omega_1}, \omega_1)$-distributive
 and $(\omega,2)$-distributive,
 then it is $(\omega_1, 2)$-distributive.
Unfortunately,
 this cannot be proved in $\zfc$
 because $\mbb{B}$ could be a Suslin algebra
 (a Suslin algebra is c.c.c.\
 and therefore is weakly $(\lambda,\omega_1)$-distributive
 for any $\lambda$).
However,
 if we add the assumption that
 $1 \forces_\mbb{B} (\omega_1 < \mf{t})$,
 where we will define $\mf{t}$ soon,
 then $\mbb{B}$ \textit{is} $(\omega_1,2)$-distributive.
The argument is simpler than
 that of Theorem (A) and Theorem(B)
 and does not need the hypothesis
 of weak $(2^{\omega_1},\omega_1)$-distributivity.
As a final twist,
 we will combine several ideas to
 prove a variation of Theorem (A).

Recall that $\mathfrak{t}$,
 the \textit{tower number},
 is the smallest length of a sequence
 $$\langle S_\alpha \in [\omega]^\omega : \alpha < \kappa \rangle$$
 satisfying
 $(\forall \alpha < \beta < \kappa)\,
 S_\alpha \supseteq^* S_\beta$
 but there is no
 $S \in [\omega]^\omega$ satisfying
 $(\forall \alpha < \kappa)\,
 S_\alpha \supseteq^* S$
 (where $S_1 \subseteq^* S_2$ means
 $S_1 - S_2$ is finite).
It is not hard to see that
 $\omega_1 \le \mathfrak{t} \le 2^\omega$.
See \cite{Blass} for more on $\mathfrak{t}$ and
 related cardinals.
The following lemma is the key.
The idea is borrowed from Farah in \cite{Farah},
 who got the idea from Dordal in \cite{Dordal},
 who got the idea from Booth.

\begin{lemma}
\label{towerlemma}
Let $\kappa$ be such that
 $\omega_1 \le \kappa < \mf{t}$.
Let $M$ be a transitive model of $\zfc$
 such that $\kappa \in M$ and
 $(\forall \alpha < \kappa)\,
 \p(\alpha) \subseteq M$.
Then $\p(\kappa) \subseteq M$.
\end{lemma}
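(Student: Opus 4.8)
The plan is to show that an arbitrary $A \subseteq \kappa$ belongs to $M$, by coding $A$ --- equivalently, its characteristic function $\chi_A \in {^\kappa 2}$ --- as a single element of $[\omega]^\omega$ from which $M$ can read $\chi_A$ back off, using the room provided by $\kappa < \mf{t}$. First I would record two easy facts. Since $\omega < \omega_1 \le \kappa$, the hypothesis gives $\p(\omega) \subseteq M$, and since every $t \colon \alpha \to 2$ with $\alpha < \kappa$ is coded by an element of $\p(\alpha) \subseteq M$, we get ${^{<\kappa}2} \subseteq M$ with $M$ computing this tree correctly. Second, because $\kappa < \mf{t}$, every $\subseteq^*$-decreasing sequence of length $\le \kappa$ has a pseudo-intersection in $[\omega]^\omega$; and if such a sequence happens to lie in $M$, then --- since it lies in $V$ too, its pseudo-intersection is a subset of $\omega$, hence an element of $\p(\omega) \subseteq M$, and ``being a pseudo-intersection'' is absolute --- $M$ itself sees a pseudo-intersection. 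In particular $M$ believes that every $\subseteq^*$-decreasing sequence of length $< \kappa$ has a pseudo-intersection.

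Next I would construct, inside $M$, an ``almost disjoint tree'' $\langle S_t : t \in {^{<\kappa}2} \rangle$ of infinite subsets of $\omega$ by recursion on the level: put $S_{\langle\rangle} = \omega$; at a successor level split $S_t$ into two disjoint infinite pieces $S_{t^\frown 0}$ and $S_{t^\frown 1}$; and at a limit level $\alpha < \kappa$ let $S_b$, for $b \in {^\alpha 2}$, be any pseudo-intersection of the $\subseteq^*$-decreasing sequence $\langle S_{b \restriction \beta} : \beta < \alpha \rangle$, which exists in $M$ by the previous paragraph. A routine check shows that along any branch the $S_t$ are $\subseteq^*$-decreasing, while for incomparable $t, t'$ the sets $S_t, S_{t'}$ are almost disjoint (they are contained mod finite in the two disjoint sets produced where the branches split). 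Crucially, using choice in $M$, this whole system is an element of $M$.

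Now I would pass to $V$. The sequence $\langle S_{\chi_A \restriction \alpha} : \alpha < \kappa \rangle$ is $\subseteq^*$-decreasing of length $\kappa < \mf{t}$, so it has a pseudo-intersection $T \in [\omega]^\omega$; and $T \in \p(\omega) \subseteq M$. Working in $M$, set $C := \{ t \in {^{<\kappa}2} : T \subseteq^* S_t \}$, which is an element of $M$. If $t, t' \in C$ were incomparable then $S_t \cap S_{t'}$ would be finite while $T \subseteq^* S_t \cap S_{t'}$, contradicting that $T$ is infinite; so $C$ is a chain under $\subseteq$, and $\chi := \bigcup C \in M$ is a function with domain $\le \kappa$. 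Back in $V$, each $\chi_A \restriction \alpha$ (for $\alpha < \kappa$) lies in $C$, so $\chi$ extends $\chi_A$; since $\chi_A$ already has domain $\kappa$, we get $\chi = \chi_A$, whence $A = \{ \alpha < \kappa : \chi_A(\alpha) = 1 \} \in M$.

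The real content --- and the step I expect to need the most care --- is the limit stage of the tree construction: the recursion must not get stuck, and this is precisely where $\kappa < \mf{t}$ enters, via the absoluteness observation that short $\subseteq^*$-decreasing sequences lying in $M$ have pseudo-intersections in $M$ (one must be slightly careful here that $M$, which need not contain all of $V$'s sequences of length $< \kappa$, nevertheless agrees that the ones it does have are resolved). Everything else --- that ${^{<\kappa}2}$, the tree, and $C$ are correctly computed by $M$, the almost-disjointness bookkeeping, and the final domain-counting argument --- is routine.
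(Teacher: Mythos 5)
Your proof is correct and follows essentially the same route as the paper's: you build in $M$ an almost-disjoint, $\subseteq^*$-decreasing tree of infinite subsets of $\omega$ indexed by ${^{<\kappa}2}$ (the paper's function $F$), use $\kappa < \mf{t}$ in $V$ to get a pseudo-intersection $T$ along the branch $\chi_A$, note $T \in M$ since $\p(\omega) \subseteq M$, and recover $\chi_A$ in $M$ as the union of $\{t : T \subseteq^* S_t\}$. Your extra care at limit stages (that $M$ sees pseudo-intersections of its own short sequences via $\p(\omega)\subseteq M$ and absoluteness) is exactly the point the paper makes there as well.
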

\begin{proof}
Fix $\kappa$ and $M$.
Since $\kappa \in M$ and
 $(\forall \alpha < \kappa)\, \p(\alpha) \subseteq M$,
 we have ${^{<\kappa}2} \subseteq M$.
Let $F : {^{<\kappa} 2} \to [\omega]^\omega$
 be a function in $M$ such that for all
 $t_1, t_2 \in {^{<\kappa} 2}$,
\begin{itemize}
\item[1)]
 $t_1 \sqsubseteq t_2 \Rightarrow
F(t_1) \supseteq^* F(t_2)$, and
\item[2)]
 $t_1 \perp t_2 \Rightarrow
 F(t_1) \cap F(t_2)$ is finite.
\end{itemize}
Such functions are easy to construct by induction
 (and the Axoim of Choice).
The construction will not get stuck at a limit
 stage $\gamma < \kappa$
 because given $t \in {^\gamma 2} \subseteq M$ and
 $\langle F(t \restriction \alpha) : \alpha < \gamma \rangle$,
 since $\gamma < \mf{t}$ there is some
 $S \in [\omega]^\omega \subseteq M$ such that
 $(\forall \alpha < \gamma)\,
 S \subseteq^* F(t \restriction \alpha)$.
The set $F(t)$ can be defined to be the least such $S$
 accoding to some fixed well-ordering of
 $[\omega]^\omega$.

Now, consider any $a \in {^\kappa 2}$.
We will show that $a \in M$.
The sequence
 $\langle F( a \restriction \alpha ) : \alpha < \kappa \rangle$
 is a $\supseteq^*$-chain (in V) of length $\kappa$.
Since $\kappa < \mf{t}$,
 fix some $S \in [\omega]^\omega$ satisfying
 $$(\forall \alpha < \kappa)\,
 S \subseteq^* F(a \restriction \alpha).$$
Since $\p(\omega) \subseteq M$,
 in particular $S \in M$.
Within $M$,
 the function $F$ and the set $S$ can be used together to define $a$:
 \[
 a = \bigcup \{ t \in {^{<\kappa}2} : S \subseteq^* F(t) \}. \qedhere
 \]
\end{proof}

By applying the lemma above inductively,
 we get an improvement:
\begin{lemma}
\label{pumpitcor}
Let $\kappa$ be such that
 $\omega_1 \le \kappa < \mf{t}$.
Let $M$ be a transitive model of $\zfc$
 such that $\p(\omega) \subseteq M$.
Then $\p(\kappa) \subseteq M$.
\end{lemma}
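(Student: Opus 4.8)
The plan is to prove Lemma~\ref{pumpitcor} by induction on $\kappa$ in the range $\omega_1 \le \kappa < \mf{t}$, using Lemma~\ref{towerlemma} as the engine for each successor-type step. The statement to establish is that any transitive model $M$ of $\zfc$ with $\p(\omega) \subseteq M$ automatically satisfies $\p(\kappa) \subseteq M$. Fix such an $M$.

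First I would set up the induction. The hypothesis $\p(\omega) \subseteq M$ is precisely the base case $\kappa = \omega$ (and since $\omega$ is in $M$ as $M \models \zfc$, the ordinal $\omega$ is available). For the inductive step, suppose $\omega_1 \le \kappa < \mf{t}$ and that $\p(\alpha) \subseteq M$ has been shown for every $\alpha$ with $\omega \le \alpha < \kappa$; together with $\p(\omega)\subseteq M$ this gives $\p(\alpha) \subseteq M$ for all $\alpha < \kappa$. We also need $\kappa \in M$: since $\p(\alpha) \subseteq M$ for all $\alpha < \kappa$ and in particular every bounded subset of $\kappa$ lies in $M$, and $M$ is a transitive model of $\zfc$, a standard argument shows $\kappa \in M$ (e.g. $M$ contains ordinals cofinally below and including any ordinal all of whose proper initial segments it contains as sets; more carefully, since $\p(\omega)\subseteq M$ every countable ordinal has a code in $M$ hence belongs to $M$, and one pushes this up). Now apply Lemma~\ref{towerlemma} with this $\kappa$ and this $M$: its hypotheses are exactly $\omega_1 \le \kappa < \mf{t}$, $\kappa \in M$, and $(\forall \alpha < \kappa)\, \p(\alpha) \subseteq M$, all of which we have arranged. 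The conclusion is $\p(\kappa) \subseteq M$, completing the inductive step. By induction, $\p(\kappa) \subseteq M$ for every $\kappa$ with $\omega_1 \le \kappa < \mf{t}$.

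The one point that deserves genuine care, and which I expect to be the main (mild) obstacle, is the verification that $\kappa \in M$ at each stage — this is needed to even invoke Lemma~\ref{towerlemma}, and it is not literally part of the inductive hypothesis unless one folds it in. The clean fix is to strengthen the induction hypothesis to ``$\kappa \in M$ and $\p(\kappa) \subseteq M$''. The successor-in-spirit argument is then: at stage $\kappa$ we know $\alpha \in M$ and $\p(\alpha)\subseteq M$ for all $\alpha<\kappa$; transitivity of $M$ plus the fact that $M$ sees all bounded subsets of $\kappa$ lets us recover $\kappa$ as (for instance) the ordertype of an element of $M$, or more directly: each $\alpha < \kappa$ is an element of $M$, $M$ is transitive, and $M \models \zfc$, so $M$ is closed under ordinal successor and $M$'s ordinals are an initial segment of $\ord$ of some length; if that length were $\le \kappa$ then some $\beta \le \kappa$ with $\beta \notin M$ would be least such, forcing $\beta$ to be a limit with $\p(\gamma)\subseteq M$ for all $\gamma<\beta$, but then a surjection from $\gamma$ onto $\beta$ for suitable $\gamma<\beta$ (if $\beta<\kappa$, using $\p(\gamma)\subseteq M$) or the argument of Lemma~\ref{towerlemma} itself (if $\beta=\kappa$) yields $\beta \in M$, a contradiction. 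I would state this as a short preliminary observation rather than belabor it, then run the induction as above.

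In summary: the proof is a one-line induction once the bookkeeping about $\kappa \in M$ is dispatched, with Lemma~\ref{towerlemma} doing all the real work at each step; no new combinatorial idea is required beyond what is already in place.
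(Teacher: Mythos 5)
Your proposal is correct and is exactly the paper's (unwritten) argument: the paper proves this lemma simply by applying Lemma~\ref{towerlemma} inductively, which is what you do, and the only points needing care are the ones you flag. Two small remarks: for countable $\alpha$ you also need $\p(\alpha)\subseteq M$ (not just $\alpha\in M$), which follows by coding subsets of $\alpha$ as reals via a real in $M$ well-ordering $\omega$ in type $\alpha$; and $\kappa\in M$ falls out more cleanly than your sketch, since $\p(\omega)\subseteq M$ forces $M$ to contain an ordinal of true cardinality $2^\omega\ge\mf{t}>\kappa$, whence $\kappa\in M$ by transitivity.
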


This last lemma is closely
 related to the fact that
 $2^\kappa = 2^\omega$
 whenever $\kappa < \mf{t}$.
A proof of this using an argument similar
 to Lemma~\ref{towerlemma}
 can be found in \cite{Blass}.
Martin's Axiom (MA) implies $\mf{t} = 2^\omega$,
 but the original proof \cite{Martin}
 that MA implies $2^\kappa = 2^\omega$
 whenever $\kappa < 2^\omega$
 used the almost disjoint coding poset.
We now have the application
 to complete Boolean algebras:
\begin{proposition}
\label{interesting_t_prop}
Let $\kappa$ be an infinite cardinal.
Let $\mbb{B}$ be a complete Boolean algebra
 such that $\mbb{B}$ is $(\omega,2)$-distributive
 and $1 \forces_\mbb{B} (\check{\kappa} < \mf{t})$.
Then $\mbb{B}$ is $(\kappa,2)$-distributive.
\end{proposition}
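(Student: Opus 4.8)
The plan is to combine the forcing characterization of distributivity with Lemma~\ref{pumpitcor}. By Fact~\ref{distfact}, $\mbb{B}$ is $(\kappa,2)$-distributive iff $1 \forces_\mbb{B} (\forall f : \check{\kappa} \to \check{2})\, f \in \check{V}$, i.e.\ iff $\p(\kappa)^{V[G]} \subseteq V$ for every $\mbb{B}$-generic $G$; likewise the hypothesis that $\mbb{B}$ is $(\omega,2)$-distributive says exactly that $\p(\omega)^{V[G]} \subseteq V$ for every such $G$. First I would dispose of the case $\kappa = \omega$, where the conclusion is literally the first hypothesis, and so assume $\kappa > \omega$, hence $\kappa \ge \omega_1$ in $V$.

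Now fix a $\mbb{B}$-generic $G$ and work in $V[G]$, aiming to show that every subset of $\kappa$ lying in $V[G]$ already lies in $V$. Since $\p(\omega)^{V[G]} \subseteq V$, no new reals are added; hence $\omega_1$ is preserved (a collapse of $\omega_1^V$ would code a new real), so $\omega_1^{V[G]} = \omega_1^V \le \kappa$. From the hypothesis $1 \forces_\mbb{B} (\check{\kappa} < \mf{t})$ we get $\kappa < \mf{t}^{V[G]}$. Thus, in $V[G]$, the ordinal $\kappa$ satisfies $\omega_1 \le \kappa < \mf{t}$, and $V$ is a transitive model of $\zfc$ with $\p(\omega) \subseteq V$. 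Applying Lemma~\ref{pumpitcor} inside $V[G]$ with $M := V$ yields $\p(\kappa)^{V[G]} \subseteq V$. As $G$ was arbitrary, $1 \forces_\mbb{B} (\forall f : \check{\kappa} \to \check{2})\, f \in \check{V}$, so by Fact~\ref{distfact} $\mbb{B}$ is $(\kappa,2)$-distributive.

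The point requiring care — the same one already implicit in the proofs of Theorem~(A) and Theorem~(B) — is that Lemmas~\ref{towerlemma} and~\ref{pumpitcor} are stated for a transitive model $M$, whereas here $M = V$ is a proper class. This is harmless: the proof of Lemma~\ref{towerlemma} uses only constructions definable inside $M$ — building the labelling $F : {^{<\kappa}2} \to [\omega]^\omega$ by recursion using a well-ordering of $[\omega]^\omega$ in $M$, choosing pseudo-intersections, and recovering $a$ as $\bigcup\{t \in {^{<\kappa}2} : S \subseteq^* F(t)\}$ — so it applies verbatim when $M$ is a transitive class model of $\zfc$, exactly as the ground model $V$ is used in the proofs of Theorems~(A) and~(B). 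One should still check that the recursion defining $F$ does not get stuck at a limit $\gamma < \kappa$: the relevant $\subseteq^*$-decreasing chain then has length $\gamma < \kappa < \mf{t}^{V[G]}$, so it admits a pseudo-intersection in $V[G]$, which is a real and hence lies in $V$ because $\p(\omega)^{V[G]} \subseteq V$. I expect this bookkeeping about class models and the non-stuck recursion to be the only real obstacle; everything else is a direct unwinding of Fact~\ref{distfact} and Lemma~\ref{pumpitcor}.
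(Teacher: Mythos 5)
Your proposal is correct and follows exactly the paper's route: the paper's entire proof is ``Apply Lemma~\ref{pumpitcor} in the forcing extension with $M$ equal to the ground model,'' and you have simply filled in the routine details (the forcing characterization from Fact~\ref{distfact}, the trivial case $\kappa=\omega$, preservation of $\omega_1$, and the harmlessness of taking $M$ to be the ground-model class). The extra care you take about the recursion for $F$ not getting stuck is the same point already handled inside the proof of Lemma~\ref{towerlemma}, so nothing further is needed.
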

\begin{proof}
Apply Lemma~\ref{pumpitcor} in the forcing extension
 with $M$ equal to the ground model.
\end{proof}

Let $\kappa$ be such that $\omega_1 \le \kappa < \mf{t}$.
Any $A \in [\omega]^\omega$
 can be partitioned into $2^\omega$
 infinite sets with pairwise finite intersection.
Thus, fixing $\lambda \le 2^\omega$,
 the function $F : {^{<\kappa}2} \to [\omega]^\omega$
 in Lemma~\ref{towerlemma} can be replaced by a function
 $F : {^{<\kappa}\lambda} \to [\omega]^\omega$ satisfying
 the same conditions.
Slightly modifying the proof of Lemma~\ref{towerlemma},
 we get that if $M$ is a transitive model of $\zfc$
 such that $\lambda \in M$ and
 $(\forall \alpha < \kappa)\, {^\alpha \lambda} \subseteq M$,
 then ${^\kappa \lambda} \subseteq M$.
Inductively applying this fact
 yields an improvement:
\begin{lemma}
\label{againlemma}
Let $\kappa$ and $\lambda$ be such that
 $\omega_1 \le \kappa < \mf{t}$ and
 $\lambda \le 2^\omega$.
Let $M$ be a transitive model of $\zfc$ such that
 $\lambda \in M$ and
 ${^\omega \lambda} \subseteq M$.
Then ${^\kappa \lambda} \subseteq M$.
\end{lemma}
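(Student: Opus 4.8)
The statement to prove is Lemma~\ref{againlemma}: if $\omega_1 \le \kappa < \mf{t}$, $\lambda \le 2^\omega$, and $M$ is a transitive model of $\zfc$ with $\lambda \in M$ and ${^\omega \lambda} \subseteq M$, then ${^\kappa \lambda} \subseteq M$. The plan is to follow exactly the two-step structure already used for $\p(\kappa)$, namely to first prove the ``successor-type'' version (the analogue of Lemma~\ref{towerlemma}) and then bootstrap it inductively (the analogue of Lemma~\ref{pumpitcor}). The paragraph immediately preceding the lemma statement has in fact already sketched both steps, so the proof mostly needs to assemble those remarks.

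\emph{Step 1 (the one-level lemma).} First I would prove: if $\omega_1 \le \kappa < \mf{t}$, $\lambda \le 2^\omega$, and $M \models \zfc$ is transitive with $\lambda \in M$ and $(\forall \alpha < \kappa)\, {^\alpha \lambda} \subseteq M$, then ${^\kappa \lambda} \subseteq M$. As noted in the excerpt, since any $A \in [\omega]^\omega$ splits into $2^\omega \ge \lambda$ infinite pieces with pairwise finite intersections, one can build (by recursion on ${^{<\kappa}\lambda}$, using AC) a function $F : {^{<\kappa}\lambda} \to [\omega]^\omega$ in $M$ with $t_1 \sqsubseteq t_2 \Rightarrow F(t_1) \supseteq^* F(t_2)$ and $t_1 \perp t_2 \Rightarrow F(t_1) \cap F(t_2)$ finite. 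The recursion does not stall at a limit $\gamma < \kappa$: we have ${^\gamma \lambda} \subseteq M$ by hypothesis, and for $t \in {^\gamma\lambda}$ the chain $\langle F(t \restriction \alpha) : \alpha < \gamma \rangle$ has length $\gamma < \mf{t}$, so a pseudo-intersection $S \in [\omega]^\omega$ exists; choose $F(t)$ to be the least such $S$ (in a fixed well-order of $[\omega]^\omega$), then split it for the successor steps. Now given any $a \in {^\kappa\lambda}$, the chain $\langle F(a \restriction \alpha) : \alpha < \kappa \rangle$ has length $\kappa < \mf{t}$, so fix a pseudo-intersection $S$; since ${^\omega\lambda} \subseteq M$ gives $\p(\omega) \subseteq M$ (once $\omega \in M$, and $\omega \in M$ as $\lambda \ge \omega$ is in $M$), we get $S \in M$, and inside $M$ one recovers $a = \bigcup\{ t \in {^{<\kappa}\lambda} : S \subseteq^* F(t) \}$.

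\emph{Step 2 (induction on $\kappa$).} From Step~1 one gets the full Lemma~\ref{againlemma} by induction on $\kappa$ exactly as Lemma~\ref{pumpitcor} follows from Lemma~\ref{towerlemma}: assuming ${^\omega\lambda}\subseteq M$, one shows ${^\beta\lambda}\subseteq M$ for every $\beta$ with $\omega \le \beta < \mf{t}$ by transfinite induction --- the base case $\beta = \omega$ is the hypothesis, and at any $\beta$ with $\omega_1 \le \beta < \mf{t}$ the induction hypothesis supplies $(\forall \alpha < \beta)\, {^\alpha\lambda}\subseteq M$ (for $\alpha$ finite or $\alpha = \omega$ this is trivial or given; for $\omega_1 \le \alpha < \beta$ it is the inductive hypothesis), so Step~1 applies with $\beta$ in place of $\kappa$. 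Taking $\beta = \kappa$ finishes.

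\emph{Main obstacle.} There is no serious obstacle --- the argument is a routine adaptation --- but the one point deserving care is the limit stage of the recursion building $F$ in Step~1, where one must confirm that ${^\gamma\lambda}\subseteq M$ (so that $F$ is being defined on a set lying in $M$) and that $\gamma < \mf{t}$ guarantees a common pseudo-intersection for the length-$\gamma$ decreasing chain; both are immediate from the hypotheses but should be stated explicitly. One should also note that $\kappa$ need not be regular here: the tower-number hypothesis $\kappa < \mf{t}$ already forces chains of length $\kappa$ (and all shorter lengths) to have pseudo-intersections, so no cofinality assumption on $\kappa$ is used, in contrast to Theorem~(B).
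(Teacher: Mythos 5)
Your proposal is correct and follows essentially the same route the paper indicates (the paper gives no formal proof, only the preceding paragraph sketching exactly your Step 1 --- replacing $F : {}^{<\kappa}2 \to [\omega]^\omega$ by $F : {}^{<\kappa}\lambda \to [\omega]^\omega$ using an almost-disjoint splitting --- followed by the inductive bootstrap of your Step 2). The only point you gloss over is the case $\omega < \alpha < \omega_1$ in the induction: there ${}^{\alpha}\lambda \subseteq M$ is neither trivial nor the inductive hypothesis, but it follows from ${}^{\omega}\lambda \subseteq M$ because $\p(\omega) \subseteq M$ puts a bijection $\omega \to \alpha$ (coded as a real) inside $M$.
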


Now we may combine
 Lemma ~\ref{againlemma}
 with the argument in 
 Lemma~\ref{lemma1}.
The case $\kappa = \omega$
 of this next lemma is
 already handled by Lemma~\ref{lemma1}.
\begin{lemma}
\label{lemma3}
Let $\kappa$ and $\lambda$ be such that
 $\omega \le \kappa < \mf{t}$
 and $\lambda \le 2^\omega$.
For each $A \subseteq \lambda$,
 there is a function $f : {^{\kappa} \lambda} \to \kappa$
 such that whenever $M$ is a transitive model of $\zfc$ such that
 ${^{\omega}\lambda} \subseteq M$
 (and therefore ${^{\kappa} \lambda} \subseteq M$) and
 some $g : {^{\kappa} \lambda} \to \kappa$ in $M$
 satisfies $f \le g$,
 then $A \in M$.
\end{lemma}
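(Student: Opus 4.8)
The plan is to mimic the proof of Lemma~\ref{lemma1}, replacing the role played there by the ordinal $\omega$ (and the absoluteness of well-foundedness for trees of height $\omega$) with the ordinal $\kappa$, and to supply the required absoluteness by hand using Lemma~\ref{againlemma}. Fix $A \subseteq \lambda$. Since $A$ may be identified with its characteristic function, which lies in ${^\lambda 2} \subseteq {^\lambda(2^\omega)}$, and since $\lambda \le 2^\omega$, we may as well work with some $a \in {^\kappa 2}$ coding $A$ (one first picks a surjection of $\kappa$, or even of $\omega \le \kappa$, onto $\lambda$ and transfers; alternatively just treat the case $\lambda \le \kappa$ directly and handle larger $\lambda$ by a preliminary coding, noting $\lambda \le 2^\omega$ and $\kappa \ge \omega$). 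Define $f : {^\kappa \lambda} \to \kappa$ exactly as in Lemma~\ref{lemma1} but indexed along $\kappa$: reading $x \in {^\kappa\lambda}$ as attempting to enumerate $A$, set $f(x) := 0$ if no coordinate of $x$ ``witnesses'' membership, and otherwise let $f(x)$ be the least ordinal $\alpha < \kappa$ (plus one) at which $x$ first reveals information about $A$. The precise bookkeeping is the routine part; the point is that a dominating $g$ forces the tree
\[
T := \{ t \in {^{<\kappa}\lambda} : g(x) \ge \dom(t) \text{ for all } x \in ({^\kappa\lambda})^M \text{ extending } t,\ \text{and all initial segments of } t \text{ have this property}\}
\]
to be a subtree of ${^{<\kappa}\lambda}$ that has nodes at every level below $\kappa$ (built by following $A$) but no branch of length $\kappa$ lying in $M$.

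Next I would argue that $T$ in fact has no cofinal branch in $M$, from which — together with the fact that the nodes of $T$ are dense below $\kappa$ — the definability of $A$ inside $M$ follows by the same ``search for the wrong branch'' argument as in Lemma~\ref{lemma1}: if at some level $\gamma$ every node extending the ``correct'' initial segment and omitting the correct next value failed to be in $B$, then $A \restriction (\gamma{+}1)$ would be definable in $M$ from $g$ and $A\restriction\gamma$; iterating gives a cofinal branch through $T$ that codes $A$, contradiction. Here is where Lemma~\ref{againlemma} enters: because ${^\omega\lambda} \subseteq M$, $\lambda \le 2^\omega$, and $\kappa < \mf t$, we get ${^\kappa\lambda} \subseteq M$, so in particular the branch we construct, being an element of ${^\kappa\lambda}$, lies in $M$ — contradiction. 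So the absoluteness input ``well-foundedness of a height-$\omega$ tree is absolute'' is replaced by the stronger-looking but available statement ``every length-$\kappa$ branch of a tree on ${^{<\kappa}\lambda}$ is in $M$,'' which is exactly what Lemma~\ref{againlemma} delivers under the hypothesis $\kappa < \mf t$.

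The main obstacle I anticipate is purely organizational rather than conceptual: making the definition of $f$ and the inductive branch-construction go through along a limit ordinal $\kappa$ rather than along $\omega$. Along $\omega$ one peels off one coordinate at a time; along $\kappa$ one must, at limit stages $\gamma < \kappa$, already have committed to $a \restriction \gamma$ and check that $a\restriction\gamma \in B$ — and this is precisely the computation carried out in the proof of Lemma~\ref{lemma2} (the displayed inequality $g(x) \ge f(x) = \gamma' + 1 > \gamma' \ge \gamma$ for $x \in M$ extending $a\restriction\gamma$), so I would lift that calculation essentially verbatim, substituting ${^{<\kappa}\lambda}$ for ${^{<\kappa}2}$. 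One should also double-check that $B$, and hence $T$, lies in $M$: this needs ${^{<\kappa}\lambda} \subseteq M$ and $g \in M$, and ${^{<\kappa}\lambda}\subseteq M$ follows from ${^\omega\lambda}\subseteq M$ via Lemma~\ref{againlemma} applied to each $\alpha<\kappa$ in place of $\kappa$ (each such $\alpha$ also satisfies $\alpha<\mf t$). With those two ingredients in place — the $\kappa$-indexed analogue of the $f$ from Lemma~\ref{lemma1}, and Lemma~\ref{againlemma} as the absoluteness engine — the proof is a direct transcription, and I would present it as such, referring back to Lemmas~\ref{lemma1} and~\ref{lemma2} for the parts that are genuinely identical.
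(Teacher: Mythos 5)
Your proposal is correct and follows essentially the same route as the paper: define the $\kappa$-indexed analogue of the function $f$ from Lemma~\ref{lemma1}, form the set $B$ of ``tall'' nodes in ${^{<\kappa}\lambda}$ (which lies in $M$ since ${^{<\kappa}\lambda}\cup\{g\}\subseteq M$), run the Lemma~\ref{lemma1} definability argument to build a length-$\kappa$ sequence all of whose initial segments lie in $B$, and then use Lemma~\ref{againlemma} to conclude that this sequence is itself in $M$ and hence in the domain of $g$, yielding $g(x)\ge\alpha$ for all $\alpha<\kappa$, a contradiction. The only quibbles are cosmetic: no preliminary coding of $A$ into ${^\kappa 2}$ is needed, the constructed branch avoids $A$ rather than ``codes'' it, and the limit stages take care of themselves because $B$ is closed under unions of increasing chains ($\dom$ of the union is the supremum of the $\dom$'s).
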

\begin{proof}
Fix $\kappa$, $\lambda$, and $A$.
Define $f : {^{\kappa} \lambda} \to \kappa$ by
 $$f(x) := \begin{cases}
 0 & \mbox{if } (\forall \alpha < \kappa)\, x(\alpha) \not\in A, \\
 \alpha + 1 & \mbox{if } x(\alpha) \in A \mbox{ but }
 (\forall \beta < \alpha)\, x(\beta) \not\in A.
 \end{cases}$$
This is the analogue of the function $f$
 defined in Lemma~\ref{lemma1}.
Now fix $M$ and some $g : {^\kappa \lambda} \to \kappa$ in $M$
 satisfying $f \le g$.
Note that $2^\omega \in M$ so therefore $\kappa, \lambda \in M$.
Let $$B := \{ t \in {^{<\kappa}\lambda} : g(x) \ge \dom(t)
 \mbox{ for all } x \mbox{ extending } t\}.$$
Since ${^{<\kappa}\lambda} \cup
 \{ \kappa, \lambda, g \} \subseteq M$,
 also $B \in M$.

Assume towards a contradiction,
 that $A \not\in M$.
Arguing just as in Lemma~\ref{lemma1},
 there is some $x \in {^\kappa \lambda}$
 satisfying $(\forall \alpha < \kappa)\,
 x \restriction \alpha \in B$.
Since ${^\kappa \lambda} \subseteq M$,
 we have $x \in M$, and in particular
 $x$ is in the domain of $g$.
We now have $(\forall \alpha < \kappa)\,
 g(x) \ge \alpha$,
 which is impossible.
\end{proof}

\begin{lemma}
Let $\kappa$ and $\lambda$ be such that
 $\omega \le \kappa < \mf{t}$ and
 $\lambda \le 2^\omega$.
Let $M$ be a transitive model of $\zfc$
 such that ${^\omega \lambda} \subseteq M$
 (and therefore ${^\kappa \lambda} \subseteq M$).
Assume that for each $f : ({\lambda ^\kappa})^M \to \kappa$
 there is some
 $g : ({\lambda ^\kappa})^M \to \kappa$ in $M$
 satisfying $f \le g$.
Then $\p(\lambda) \subseteq M$.
\end{lemma}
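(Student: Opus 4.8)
The plan is to follow the proof of Lemma~\ref{straitomegalemma} almost verbatim, substituting Lemma~\ref{lemma3} for Lemma~\ref{lemma1} and invoking Lemma~\ref{againlemma} to push the hypothesis ${^\omega \lambda} \subseteq M$ up to ${^\kappa \lambda} \subseteq M$.

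First I would fix an arbitrary $A \in \p(\lambda)$. Since $\omega \le \kappa < \mf{t}$ and $\lambda \le 2^\omega$, Lemma~\ref{lemma3} applied to $A$ produces a function $\tilde{f} : {^\kappa \lambda} \to \kappa$ such that any $\tilde{g} : {^\kappa \lambda} \to \kappa$ in $M$ with $\tilde{f} \le \tilde{g}$ already implies $A \in M$. By Lemma~\ref{againlemma} (whose hypotheses $\lambda \in M$ and ${^\omega \lambda} \subseteq M$ hold) we have ${^\kappa \lambda} \subseteq M$; since $M$ is transitive and $\kappa, \lambda \in M$, this gives $({^\kappa \lambda})^M = {^\kappa \lambda}$, and in particular ${^\kappa \lambda} \in M$, so speaking of functions on this domain lying in $M$ makes sense.

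Because $M \models \zfc$, the set $({^\kappa \lambda})^M$ is well-orderable in $M$, so I would fix a bijection $\eta : (\lambda^\kappa)^M \to ({^\kappa \lambda})^M$ in $M$ and define $f : (\lambda^\kappa)^M \to \kappa$ by $f(\alpha) := \tilde{f}(\eta(\alpha))$. By hypothesis there is a function $g : (\lambda^\kappa)^M \to \kappa$ in $M$ that everywhere dominates $f$. Then $\tilde{g} : {^\kappa \lambda} \to \kappa$ defined by $\tilde{g}(x) := g(\eta^{-1}(x))$ lies in $M$ and satisfies $\tilde{f} \le \tilde{g}$, so by the defining property of $\tilde{f}$ we conclude $A \in M$. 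As $A \in \p(\lambda)$ was arbitrary, $\p(\lambda) \subseteq M$.

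There is no substantive obstacle here beyond the bookkeeping already carried out in Lemma~\ref{straitomegalemma}; the only point needing care is the equality $({^\kappa \lambda})^M = {^\kappa \lambda}$, which is exactly what Lemma~\ref{againlemma} delivers and which is what allows $\eta$ to translate between the ordinal domain $(\lambda^\kappa)^M$ appearing in the statement and the sequence domain ${^\kappa \lambda}$ required by Lemma~\ref{lemma3}.
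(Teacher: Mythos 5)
Your proof is correct and matches the paper's intent exactly: the paper's own proof is the one-liner ``this follows immediately from the previous lemma,'' with the bijection/transfer argument you spell out being precisely the bookkeeping already displayed in Lemma~\ref{straitomegalemma}. The one point you rightly flag --- that Lemma~\ref{againlemma} gives $({^\kappa \lambda})^M = {^\kappa \lambda}$ so that $\eta$ can mediate between the ordinal domain and the sequence domain --- is exactly the intended content of the parenthetical ``(and therefore ${^\kappa \lambda} \subseteq M$)'' in the statement.
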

\begin{proof}
This follows immediately
 from the previous lemma.
\end{proof}

Now follows the theorem:
\begin{theorem}
Let $\mbb{B}$ be a complete Boolean algebra.
Let $\kappa$ and $\lambda$ be such that
 $1 \forces_\mbb{B} ( \check{\kappa} < \mf{t} )$ and
 $1 \forces_\mbb{B} ( \check{\lambda} \le 2^\omega )$.
Assume that $\mbb{B}$ is $(\omega,\lambda)$-distributive
 and weakly $(\lambda^\kappa, \kappa)$-distributive.
Then $\mbb{B}$ is
 $(\lambda,2)$-distributive.
\end{theorem}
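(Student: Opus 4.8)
The plan is to mimic the pattern already used three times in the paper: deduce a ZFC statement about distributivity of $\mbb{B}$ from a lemma by passing to a forcing extension and setting $M$ equal to the ground model $V$. Concretely, assume $\mbb{B}$ is $(\omega,\lambda)$-distributive and weakly $(\lambda^\kappa,\kappa)$-distributive, and force with $\mbb{B}$; work in the extension $V[G]$. The hypotheses $1 \forces_\mbb{B} (\check\kappa < \mf{t})$ and $1 \forces_\mbb{B} (\check\lambda \le 2^\omega)$ guarantee that in $V[G]$ we have $\kappa < \mf{t}$ and $\lambda \le 2^\omega$, so the cardinal arithmetic hypotheses of the preceding lemma hold there. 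The task is then to check that $V$, viewed from inside $V[G]$, satisfies the hypotheses of that lemma with respect to the pair $(\kappa,\lambda)$, and then to conclude $(\lambda,2)$-distributivity via Fact~\ref{distfact}.

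First I would verify that ${^\omega \lambda}^{V[G]} \subseteq V$. This is exactly what $(\omega,\lambda)$-distributivity gives: by Fact~\ref{distfact} applied with the pair $(\omega,\lambda)$, every function $\omega \to \lambda$ in $V[G]$ already lies in $V$. (One must be slightly careful that $\lambda$ here is the real $\lambda$, not some collapsed ordinal, but since $1$ forces $\check\lambda \le 2^\omega$ and $\lambda$ is a ground-model ordinal, this is fine.) Hence $M := V$ satisfies ${^\omega \lambda} \subseteq M$, and by the parenthetical conclusion of Lemma~\ref{againlemma} (applied inside $V[G]$), also ${^\kappa \lambda}^{V[G]} \subseteq V$. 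Second I would verify the domination hypothesis: every $f : (\lambda^\kappa)^{V[G]} \to \kappa$ in $V[G]$ is everywhere dominated by some $g : (\lambda^\kappa)^{V[G]} \to \kappa$ lying in $V$. Here I need that $(\lambda^\kappa)^{V[G]}$ is the ordinal $\lambda^\kappa$ computed in $V[G]$; since ${^\kappa \lambda}^{V[G]} \subseteq V$ and $V \subseteq V[G]$, in fact ${^\kappa\lambda}$ is the same in both models, so $\lambda^\kappa$ is absolute, and weak $(\lambda^\kappa,\kappa)$-distributivity via Fact~\ref{weakdistfact} supplies the dominating $g \in V$.

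With both hypotheses of the last unnamed lemma (the one concluding $\p(\lambda) \subseteq M$) verified for $M = V$ inside $V[G]$, that lemma yields $\p(\lambda)^{V[G]} \subseteq V$. Equivalently, every $A : \lambda \to 2$ in the extension is in the ground model, so by Fact~\ref{distfact} with the pair $(\lambda,2)$, $\mbb{B}$ is $(\lambda,2)$-distributive. I would expect the main obstacle to be not any deep mathematics but the bookkeeping of \emph{which} of $\kappa,\lambda$ are being treated as ordinals versus cardinals and verifying the absoluteness claims: specifically, confirming that $(\lambda^\kappa)^{V[G]}$ really does coincide with the domain of the functions the lemma wants, and that applying Lemma~\ref{againlemma} inside $V[G]$ is legitimate given that its statement is about a transitive model $M$ of $\zfc$ sitting inside the ambient universe — here that ambient universe is $V[G]$ and $M = V$, which is indeed a transitive model of $\zfc$. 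Once one is comfortable that all these absoluteness points go through exactly as in the proofs of Theorems (A) and (B), the argument is a direct transcription of those proofs with Lemma~\ref{straitomegalemma} replaced by the final lemma of this section.
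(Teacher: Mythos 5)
Your proof is correct and is exactly the argument the paper intends (the paper actually omits the proof of this theorem, but it is the same "force with $\mbb{B}$, set $M$ equal to the ground model, apply the preceding lemma, conclude via Fact~\ref{distfact}" pattern used for Theorem (A), Theorem (B), and Proposition~\ref{interesting_t_prop}). Your absoluteness worries resolve the right way: the lemma's domination hypothesis concerns $(\lambda^\kappa)^M=(\lambda^\kappa)^V$, which is precisely the cardinal appearing in the weak distributivity assumption, and the forced statements $\check\kappa<\mf{t}$ and $\check\lambda\le 2^\omega$ supply the cardinal hypotheses of Lemma~\ref{againlemma} and of the final lemma inside $V[G]$.
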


\section{Suslin Algebras and $\textrm{MA}(\omega_1)$}

The theorems in this paper relied on
 absoluteness results concerning trees.
We can get a counterexample to a generalization
 of these theorems by using a Suslin tree
 (a tree of height $\omega_1$ such that every
 branch and antichain is at most countable).
Recall the following definition:
\begin{definition}
A \defemp{Suslin algebra} is a complete Boolean algebra
 that is atomless, $(\omega,\kappa)$-distributive for
 each cardinal $\kappa$, and c.c.c.
\end{definition}
It is a theorem of $\zfc$ that
 there exists a Suslin algebra iff there exists
 a Suslin tree.
Furthermore, given a Suslin algebra $\mbb{B}$,
 there is a Suslin tree (turned upside down)
 that completely embeds into $\mbb{B}$,
 so $\mbb{B}$ is not $(\omega_1, 2)$-distributive
 (see \cite{Jech2}).
Immediately, we see that
 Theorem (B)
 cannot be changed
 by simply replacing the weakly compact cardinal
 $\kappa$ with $\omega_1$:
\begin{counterexample}
Let $\mbb{B}$ be a Suslin algebra.
Then $\mbb{B}$ is weakly $(2^{\omega_1}, \omega_1)$-distributive
 and $\mbb{B}$ is $(\omega,2)$-distributive,
 but $\mbb{B}$ is not $(\omega_1, 2)$-distributive.
\end{counterexample}
\begin{proof}
The only claim left to be verified is that
 $\mbb{B}$ is weakly $(2^{\omega_1}, \omega_1)$-distributive.
In fact, we will show that $\mbb{B}$ is
 weakly $(\lambda, \omega_1)$-distributive for all $\lambda$.
To see why, fix $\lambda$ and fix a $\mbb{B}$-name $\dot{f}$ such that
 $$1 \forces_{\mbb{B}} \dot{f} : \check{\lambda} \to \omega_1.$$
Since $\mbb{B}$ has the c.c.c., there are only countably many
 possible values for a given term in the forcing language.
In particular, for each $\alpha < \lambda$,
 there are only countably many possible values for
 $\dot{f}(\check{\alpha})$.
For each $\alpha < \lambda$, let
 $g(\alpha) < \omega_1$ be the supremum of
 these possible values.
We now have
 $$1 \forces_{\mbb{B}} (\forall \alpha < \check{\lambda})\,
 \dot{f}(\alpha) \le \check{g}(\alpha).$$
Since $\dot{f}$ was arbitrary,
 by Fact~\ref{weakdistfact}
 $\mbb{B}$ is weakly $(\lambda, \omega_1)$-distributive. 
\end{proof}

Unfortunately,
 the counterexample above used a Suslin algebra,
 which $\zfc$ does not prove exists.
In particular, we ask the following:
\begin{question}
Is it consistent with $\zfc$ that
 every complete Boolean algebra
 that is both $(\omega,\kappa)$-distributive
 for all $\kappa$ and
 weakly $(\lambda,\omega_1)$-distributive
 for all $\lambda$
 must also be $(\omega_1, 2)$-distributive?
\end{question}

The intuitive way to try to affirmatively answer the
 above question is to consider a model of $\textrm{MA}(\omega_1)$.
By Proposition~\ref{interesting_t_prop},
 we only need to worry about those $\mbb{B}$ such that
 $1 \forces_{\mbb{B}} (\omega_1 = \mf{t})$.
We present another result which shows we do not
 need to worry about complete Boolean algebras that
 satisfy both a strong chain condition and
 enough weak distributivity laws.
The main idea is the following:
 if we have a size $\lambda$ collection $\mc{C}$ of
 antichains in $\mbb{B}$ each of size $\kappa'$,
 then if $\mbb{B}$ is
 weakly $(\lambda, \kappa')$-distributive,
 then there is a maximal antichain $A \subseteq \mbb{B}$
 such that below each $a \in A$,
 each antichain in $\mc{C}$ has $< \kappa'$
 non-zero elements.
Assuming also that $\mbb{B}$ is $(\omega,|\mbb{B}|)$-distributive,
 we can repeatedly apply this construction
 countably many times until we produce a maximal antichain $B_{\omega}$
 such that below each $b' \in B_{\omega}$,
 each antichain of $\mbb{B}$ has only countably many non-zero elements.
That is, $B_{\omega}$ will witness that $\mbb{B}$ is
 ``locally c.c.c.''.
Then, we use a result of Baumgartner
 to conclude that
 since $\mbb{B}$ is locally c.c.c.\ and $(\omega, 2)$-distributive,
 $\mbb{B}$ is either $(\omega_1, 2)$-distributive
 or a Suslin tree can be embedded into $\mbb{B}$.
If we assume there are no Suslin trees
 (which follows from $\textrm{MA}(\omega_1)$),
 we get that $\mbb{B}$ must be $(\omega_1, 2)$-distributive.

\begin{theorem*}[D]
Assume there are no Suslin trees.
Let $\mbb{B}$ be a complete Boolean algebra such that
 $\mbb{B}$ is $(\omega, |\mbb{B}|)$-distributive,
 $\mbb{B}$ is $\kappa$-c.c.\ for some $\kappa < \aleph_{\omega_1}$,
 and $(\forall \textrm{uncountable } \kappa' < \kappa)\, \mbb{B}$ is
 weakly $(|\mbb{B}|^{\kappa'}, \kappa')$-distributive.
Then $\mbb{B}$ is $(\omega_1,2)$-distributive.
\end{theorem*}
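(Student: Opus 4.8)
Let $\lambda := |\mbb{B}|$. The plan is to isolate the ``local chain condition'' reduction sketched in the discussion above as a lemma, apply it once at $\aleph_1$, and then invoke Baumgartner's dichotomy. We may assume $\kappa > \aleph_1$: if $\kappa \le \aleph_1$ then $\mbb{B}$ is already c.c.c., and then, being also $(\omega,2)$-distributive (which follows from $(\omega,|\mbb{B}|)$-distributivity) and since there are no Suslin trees, $\mbb{B}$ is $(\omega_1,2)$-distributive by Baumgartner's theorem. So $\aleph_1$ is an uncountable cardinal below $\kappa$, and by hypothesis $\mbb{B}$ is weakly $(\lambda^{\aleph_1},\aleph_1)$-distributive.

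\emph{Key Lemma.} If $\kappa'$ is an uncountable cardinal and $\mbb{B}$ is weakly $(\lambda^{\kappa'},\kappa')$-distributive, then there is a maximal antichain $A \subseteq \mbb{B}$ such that $\mbb{B}\restriction a$ is $\kappa'$-c.c.\ for every $a \in A$. To prove this, enumerate all antichains of $\mbb{B}$ of size $\kappa'$ as $\langle A_\xi : \xi < \mu\rangle$; since such an antichain is an element of $[\mbb{B}]^{\kappa'}$, we have $\mu \le \lambda^{\kappa'}$ (and if $\mu = 0$ then $\mbb{B}$ is already $\kappa'$-c.c.\ and we take $A = \{1\}$). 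Fix enumerations $A_\xi = \{ a_{\xi,\beta} : \beta < \kappa' \}$. Force with $\mbb{B}$ and let $\dot{f}$ name the function $\mu \to \kappa'$ sending $\xi$ to the unique $\beta$ with $a_{\xi,\beta} \in \dot{G}$, when one exists, and to $0$ otherwise (well-defined, as $A_\xi$ is an antichain). Since $\mu \le \lambda^{\kappa'}$, weak $(\lambda^{\kappa'},\kappa')$-distributivity gives weak $(\mu,\kappa')$-distributivity, so by Fact~\ref{weakdistfact} there is a name $\dot{g}$ with $1 \forces \dot{g} \in \check{V} \wedge \dot{f} \le \dot{g}$; as $\dot{g}$ is forced into the ground-model set of functions $\mu \to \kappa'$, we may fix a maximal antichain $A = \{ a_j : j \in J \}$ and ground-model functions $g_j : \mu \to \kappa'$ with $a_j \forces \dot{g} = \check{g}_j$. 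Suppose $\mbb{B}\restriction a_j$ is not $\kappa'$-c.c.\ for some $j$; shrinking an antichain below $a_j$ of size $\ge \kappa'$ to size exactly $\kappa'$ gives some $A_\xi$ with every $a_{\xi,\beta} \le a_j$. As $\kappa'$ is a limit ordinal and $g_j(\xi) < \kappa'$, pick $\beta$ with $g_j(\xi) < \beta < \kappa'$ and a generic $G$ with $a_{\xi,\beta} \in G$. Then $a_j \in G$, hence $\dot{g}^G = g_j$ and $\dot{f}^G(\xi) \le g_j(\xi) < \beta$; but $a_{\xi,\beta}$ is the unique element of $A_\xi$ in $G$, so $\dot{f}^G(\xi) = \beta$ --- a contradiction. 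This proves the Key Lemma.

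Applying the Key Lemma with $\kappa' = \aleph_1$ produces a maximal antichain $A$ with each $\mbb{B}\restriction a$ c.c.c.; that is, $\mbb{B}$ is locally c.c.c. Since $\mbb{B}$ is also $(\omega,2)$-distributive, Baumgartner's theorem gives that $\mbb{B}$ is either $(\omega_1,2)$-distributive or admits an embedded Suslin tree; the latter is excluded by hypothesis, so $\mbb{B}$ is $(\omega_1,2)$-distributive. Alternatively, one may apply the Key Lemma once for each uncountable cardinal below $\kappa$ and use $(\omega,|\mbb{B}|)$-distributivity to take a common refinement of the resulting countably many maximal antichains, which is the route indicated in the discussion above.

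I expect the Key Lemma to be the main obstacle, and within it the verification that the maximal antichain extracted from the dominating function $\dot{g}$ actually witnesses the local chain condition; as in Lemma~\ref{lemma1}, this is precisely where \emph{everywhere} domination in the definition of weak distributivity --- rather than mod-finite or eventual domination --- is used essentially. The remaining bookkeeping ($\mu \le \lambda^{\kappa'}$, deciding $\dot{g}$ on a maximal antichain, and the appeal to Baumgartner's dichotomy) is routine.
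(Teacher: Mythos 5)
Your proposal is correct, but it takes a genuinely different --- and in fact shorter --- route than the paper's. The paper does not extract local $\kappa'$-c.c.\ from a single application of weak distributivity: there, $\dot{f}$ reads off which member of each \emph{maximal} antichain of size $\kappa'$ lies in $\dot{G}$, and deciding the dominating function only bounds, below each element of a refinement, the number of nonzero traces of each such antichain by some cardinal $< \kappa'$. Consequently the paper iterates: for each of the (countably many, by $\kappa < \aleph_{\omega_1}$) uncountable cardinals $\aleph_\alpha < \kappa$ it builds antichains $C_{n,\alpha}$, uses $(\omega,|\mbb{B}|)$-distributivity to amalgamate them into $B_{n+1}$ refining $B_n$ (at stage $n+1$ treating size-$\kappa'$ partitions of elements of $B_n$), takes $B_\omega$ refining all $B_n$, and only then obtains local c.c.c.\ below $B_\omega$ via a descending-cardinals termination argument, before quoting Baumgartner. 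Your Key Lemma bypasses all of this by enumerating \emph{all} antichains of size $\kappa'$, not just maximal ones: if some $A_\xi$ of size $\kappa'$ lies entirely below a condition $a_j$ with $a_j \forces \dot{g} = \check{g}_j$, then any $a_{\xi,\beta}$ with $\beta > g_j(\xi)$ forces both $\dot{f}(\check{\xi}) = \check{\beta}$ and $\dot{f}(\check{\xi}) \le \check{g}_j(\check{\xi})$, a contradiction; so each $\mbb{B} \restriction a_j$ is $\kappa'$-c.c.\ outright. The point that makes this one-step argument work, and which the paper's maximal-antichain version lacks, is precisely that the witnessing antichain sits wholly below the single deciding condition. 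I checked the argument and it is sound, and it even weakens the hypotheses: only the $\kappa' = \aleph_1$ instance of weak distributivity, $(\omega,2)$-distributivity, and the absence of Suslin trees are used, so neither $\kappa < \aleph_{\omega_1}$ nor the full strength of $(\omega,|\mbb{B}|)$-distributivity is needed (your alternative closing remark about combining countably many refinements would reintroduce those hypotheses, but it is unnecessary). The final steps --- applying Baumgartner's dichotomy below each element of your maximal antichain and passing from local to global $(\omega_1,2)$-distributivity --- coincide with the paper's.
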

\begin{proof}
We will construct a sequence of maximal antichains
 $$\langle B_n \subseteq \mbb{B} : n \in \omega \rangle$$
 such that $B_0 := \{ 1_{\mbb{B}} \}$ and
 $(\forall n < m < \omega)\, B_m$ refines $B_n$.
Each $B_n$ will have the property that for any maximal antichain $A$ below
 an element $b \in B_n$, for each $b' \in B_{n+1}$ extending $b$,
 $A$ will have $< |A|$ non-zero elements below $b'$.
We will then define the maximal antichain $B_\omega$ to refine each
 $B_n$, and we will argue that below each $b_{\omega} \in B_\omega$,
 $\mbb{B}$ is c.c.c.

Let $\kappa < \aleph_{\omega_1}$
 be the least cardinal such that
 $\mbb{B}$ is $\kappa$-c.c.
Define $B_0 := \{ 1_\mbb{B} \}$.
We will now define a maximal antichain
 $B_1 \subseteq \mbb{B}$ (which trivially refines $B_0$).
Every antichain in $\mbb{B}$ has size $< \kappa$.
Consider an uncountable cardinal
 $\kappa' = \aleph_\alpha < \kappa$.
Let $\lambda := |\mbb{B}|^{\kappa'}$.
Let $\langle A_\beta : \beta < \lambda \rangle$
 be an enumeration of the maximal antichains
 in $\mbb{B}$ of size $\kappa'$.
For each $\beta < \lambda$,
 let $\langle a_{\beta, \gamma} : \gamma < \kappa' \rangle$
 be an enumeration of the elements of $A_\beta$.
Let $\dot{G}$ be the canonical name for the generic filter.
Fix a name $\dot{f}$ such that
 $1 \forces \dot{f} : \check{\lambda} \to \check{\kappa'}$ and
 $$1 \forces (\forall \beta < \check{\lambda})\,
 \check{a}_{\beta, \dot{f}(\beta)} \in \dot{G}.$$
By hypothesis,
 $\mbb{B}$ is weakly $(\lambda, \kappa')$-distributive,
 so there is a maximal antichain $C_{0,\alpha} \subseteq \mbb{B}$
 (which trivially refines $B_0$) and a name $\dot{g}$ such that
 $1 \forces \dot{g} : \check{\lambda} \to \check{\kappa}'$ and
 $$1 \forces (\forall \beta < \check{\lambda})\,
 \dot{f}(\beta) \le \dot{g}(\beta).$$
Hence,
 $$1 \forces (\forall \beta < \check{\lambda})
 (\forall \gamma < \check{\kappa}')\,
 \gamma > \dot{g}(\beta) \Rightarrow
 \check{a}_{\beta, \gamma} \not\in \dot{G}.$$
This implies that
 below each $c \in C_{0,\alpha}$,
 each $A_\beta$ has
 $< |A_\beta| = \kappa'$ non-zero elements.
That is,
 for each $c \in C_{0, \alpha}$ and $A_\beta$,
 there are $< |A_\beta|$ many $a \in A_\beta$
 such that $c \wedge a \not= 0_{\mbb{B}}$.
 
For each $\aleph_\alpha < \kappa$,
 we have such a maximal antichain $C_{0,\alpha} \subseteq \mbb{B}$.
Since $\kappa < \aleph_{\omega_1}$,
 the family
 $\langle C_{0,\alpha} \subseteq \mbb{B} : \aleph_\alpha < \kappa \rangle$
 is countable.
Each $C_{0, \alpha}$ has size $\le |\mbb{B}|$,
 so since $\mbb{B}$ is $(\omega, |\mbb{B}|)$-distributive,
 we may fix a single maximal antichain $B_1 \subseteq \mbb{B}$
 which refines each $C_{0, \alpha}$.
Note that $B_1$ has the property that
 for each maximal antichain $A \subseteq \mbb{B}$ (below $1_\mbb{B}$)
 and $b' \in B_1$,
 $A$ has $< |A|$ non-zero elements below $b'$.

We will now define $B_2$.
Consider an uncountable cardinal $\kappa' = \aleph_\alpha < \kappa$.
Let $\lambda := |\mbb{B}|^{\kappa'}$.
Let $\langle A_{\beta} : \beta < \lambda \rangle$
 be an enumeration of all size $\kappa'$
 antichains that are each a partition of some element of $B_1$.
Since $\mbb{B}$ is weakly $(\lambda, \kappa')$-distributive,
 we may use a similar argument as before to get a
 maximal antichain $C_{1,\alpha}$ which refines $B_1$ such that
 below each $c \in C_{1,\alpha}$, each $A_\beta$ has
 $< |A_\beta| = \kappa'$ non-zero elements.
This completes the construction of $C_{1,\alpha}$.
As before, we may use the
 $(\omega, |\mbb{B}|)$-distributivity of $\mbb{B}$
 to get a common refinement $B_2$ of every
 maximal antichain in the family
 $\langle C_{1,\alpha} : \aleph_\alpha < \kappa \rangle$.
Note that $B_2$ has the property that
 for every partition $A$ of some element of $B_1$
 and $b' \in B_2$, $A$ has $<|A|$ non-zero elements below $b'$.

We may continue this procedure to get a sequence
 $\langle B_n : n \in \omega \rangle$ of maximal antichains
 of $\mbb{B}$.
The following diagram depicts the maximal antichains
 which we have constructed, where an arrow represents refinement:
$$\xymatrix{
 B_0 \ar[d] \ar[dr] \ar[drr] \ar[drrr] \\
 C_{0,1} \ar[d] & C_{0,2} \ar[dl] & C_{0,3} \ar[dll] & {...} \ar[dlll] \\
 B_1 \ar[d] \ar[dr] \ar[drr] \ar[drrr] \\
 C_{1,1} \ar[d] & C_{1,2} \ar[dl] & C_{1,3} \ar[dll] & {...} \ar[dlll]  \\
 {...}
 }$$
Using the $(\omega, |\mbb{B}|)$-distributivity of $\mbb{B}$
 once more, we may get a single maximal antichain
 $B_\omega \subseteq \mbb{B}$
 which refines each $B_n$.
We will now argue that given any maximal antichain $A \subseteq \mbb{B}$
 and $b_\omega \in B_\omega$,
 $A$ has only countably many non-zero elements below $b$.

Fix an arbitrary maximal antichain $A_0 \subseteq \mbb{B}$.
Fix $b_{\omega} \in B_\omega$.
Let $\kappa_0 := |A_0|$.
If $\kappa_0 \le \omega$, we are done.
If not, let $b_1$ be the unique element of $B_1$ above $b_\omega$.
By the construction of $B_1$,
 $A_0$ has $< \kappa_0$ non-zero elements below $b_1$.
Let $\kappa_1 < \kappa_0$ be the number of such non-zero elements.
That is, letting
 $$A_1 := \{ a \wedge b_1 : a \in A_0 \},$$
 we have $|A_1| = \kappa_1 < \kappa_0$.
If $\kappa_1 \le \omega$, we are done because
 $|\{ a \wedge b_\omega : a \in A_0 \}| \le |A_1| \le \omega$.
Otherwise,
 let $b_2$ be the unique element of $B_2$ above $b_\omega$.
By the construction of $B_2$,
 $A_1$ has $< \kappa_1$ non-zero elements below $b_2$.
Let $\kappa_2 < \kappa_1$ be the number of such non-zero elements.
That is, letting
 $$A_2 := \{ a \wedge b_2 : a \in A_1 \},$$
 we have $|A_2| = \kappa_2 < \kappa_1$.
If $\kappa_2 \le \omega$, we are done by similar reasons as before.
If not, then we may continue the procedure.
However, the procedure will eventually terminate.
This is because if not,
 then we would have an infinite sequence of
 decreasing cardinals
 $$\kappa_0 > \kappa_1 > \kappa_2 > ...,$$
 which is impossible.
Thus, $A_0$ has only countably many non-zero elements below $b_\omega$.

At this point,
 we have argued that below the maximal antichain
 $B_\omega$, $\mbb{B}$ has the c.c.c.
Now, it must be that $\mbb{B}$ is $(\omega_1, 2)$-distributive.
Let us explain.
It suffices to show that $\mbb{B}$ is
 $(\omega_1,2)$-distributive below each element of $B_\omega$.
Fix any $b_\omega \in B_\omega$.
Below $b_\omega$, $\mbb{B}$ is c.c.c.\ and $(\omega,2)$-distributive.
Suppose, towards a contradiction,
 that $\mbb{B}$ is not $(\omega_1,2)$-distributive.
Quoting a result of Baumgartner
 \footnote{This was discovered independently by Andreas Blass
 who was told it was already proved by James Baumgartner.
 However, neither the author nor Blass have
 been able to find a proof in the literature.},
 there exists a Suslin tree which,
 when turned upside down, can be embedded into $\mbb{B}$
 below $b_\omega$.
However, we assumed there are no Suslin trees.
This completes the proof.
\end{proof}

\end{document}